\newtheorem{prop}{Proposition}[section]
\newtheorem{lemma}{Lemma}[section]
\newtheorem{defn}{Definition}[section]
\newcounter{alphthm}
\newtheorem{Lemma}[alphthm]{Lemma}
\newtheorem{cor}{Corollary}[section]
\newtheorem{rem}{Remark}[section]
\newtheorem{ex}{Example}[section]
\newtheorem{thm}{Theorem}
\newtheorem{lem}{Lemma}[section]
\newcommand{\be}{\begin{equation}}
\newcommand{\ee}{\end{equation}}
\newcommand{\ben}{\begin{enumerate}}
\newcommand{\een}{\end{enumerate}}
\newcommand{\beq}{\begin{eqnarray}}
\newcommand{\eeq}{\end{eqnarray}}
\newcommand{\beqn}{\begin{eqnarray*}}
\newcommand{\eeqn}{\end{eqnarray*}}
\newcommand{\p}{\partial}
\newcommand{\bpf}{\begin{proof}}
\newcommand{\epf}{\end{proof}}
\newcommand{\bl}{\begin{lem}}
\newcommand{\el}{\end{lem}}
\newcommand{\bp}{\begin{prop}}
\newcommand{\ep}{\end{prop}}
\newcommand{\bd}{\begin{defn}}
\newcommand{\ed}{\end{defn}}
\newcommand{\bt}{\begin{thm}}
\newcommand{\et}{\end{thm}}
\def\nn{\nonumber}
\newcommand\bpr{\begin{prop}}
\newcommand\epr{\end{prop}}
\title{Ricci flow on Finsler manifolds}
\author{B. Bidabad\thanks{The corresponding author  bidabad@aut.ac.ir; behroz.bidabad@math.univ-toulouse.fr}\, and\, M. K. Sedaghat}
\date{}
\begin{document}
\maketitle

\begin{abstract}
This paper investigates the short-time existence and uniqueness of Ricci flow solutions on Finsler manifolds. The main results of this paper are theorems demonstrating the short-time existence of the flow solution for $n$-dimensional Finsler manifolds and the uniqueness of the solution for isotropic Finsler manifolds. Two examples are also presented to illustrate the results.
 \end{abstract}
\vspace{.5cm}
{\footnotesize\textbf{Keywords:} Ricci flow, Ricci-DeTurck flow, parabolic differential equation, Finsler space. }\\
{\footnotesize\textbf{AMS subject classification}: {53C60, 53B40.}
\section{Introduction}
One of the primary aims of geometric flows is to produce canonical geometric structures by deforming general initial metrics to these canonical structures. The Ricci flow theory and its applications have become a rapidly developing branch of mathematics, with its most remarkable achievement being G. Perelman's proof of Thurston's geometrization conjecture. Hamilton's Ricci flow, introduced in 1982, is a geometric flow with numerous applications in physics and real-world problems. The Ricci flow conformally deforms the Riemannian metric to its induced curvature, such that the curvature tensors evolve by a system of diffusion equations that distributes the curvature uniformly over the manifold. Hamilton demonstrated that, on a closed manifold, there is a unique solution to the Ricci flow over a sufficiently short-time, and that on a compact three-manifold with an initial metric of positive Ricci curvature, the Ricci flow converges, after re-scaling to keep constant volume, to a metric of positive constant sectional curvature, proving that the manifold is diffeomorphic to the three-sphere $\mathbb{S}^{3}$ or a quotient of the three-sphere $\mathbb{S}^{3}$ by a linear group of isometries.

The Ricci flow has demonstrated its great potential in geometry and physics, solving various problems that cannot easily be addressed by alternative methods. In discrete geometry, the discrete Ricci flow is a powerful tool for computing metrics with prescribed Gaussian curvatures on general surfaces. Examples of applications related to the discrete Ricci flow include global conformal parameterizations in computer graphics and analysis of medical imaging, surface matching, and manifold splines.

 In Finsler geometry, a natural generalization of Riemannian geometry, the problem of constructing the Finslerian Ricci flow raises several new conceptual and fundamental issues regarding the compatibility of geometrical and physical objects and their optimal configurations. A fundamental step in studying any system of evolutionary partial differential equations is to show the short-time existence and uniqueness of solutions. Several joint works have studied the evolution of a family of Finsler metrics along the Ricci flow, demonstrating that Finsler Ricci flow exists in a short time and converges to a limit metric (see \cite{YB2}). S. Lakzian has further applied the Finsler–Ricci flow to Harnack estimates of the heat equation, using the tools presented by S. Ohta and K. Sturm in \cite{OhSt}.

 The authors of the present work have shown the existence and uniqueness of solutions to the Ricci flow on Finsler surfaces in \cite{BKS1}, extending the work of R. S. Hamilton on Riemannian surfaces \cite{Ham2}. These results have numerous applications in general relativity, though too many to list here. However, in dimensions $n>2$, the uniqueness of solutions is no longer valid,  \cite{BKS3}. In \cite{BKS2}, it was further shown that the Finsler Ricci flow preserves the positivity of the reduced $hh$-curvature on finite time, and the evolution of the Ricci scalar is a parabolic-type equation. Additionally, if the initial Finsler metric has positive flag curvature, then the flag curvature, as well as the Ricci scalar, remain positive as long as the solution exists.

In the present work, we investigate the Ricci flow on closed $n$-dimensional Finsler manifolds, with $n>2$, and prove the short-time existence of solutions. Intuitively, since the Ricci flow system of equations is only weakly parabolic, the standard theory of parabolic equations cannot be used to prove its short-time existence and uniqueness. Following the procedure outlined by D. DeTurck in the Riemannian space \cite{DeT}, we introduce the Finslerian Ricci-DeTurck flow by Eq. (\ref{22}) and study the solutions. We then find a solution to the original Ricci flow by pulling back the solution of the Ricci-DeTurck flow using suitable diffeomorphisms and prove the existence of short-time solutions.

\begin{thm}\label{main8}
Let $M$ be a compact Finsler manifold. Given any initial Finsler structure $F_{0}$, there exists a real number $T>0$ and a smooth one-parameter family of Finsler structures $\tilde{F}(t)$, for all $t\in[0,T)$, such that $\tilde{F}(t)$ is a solution to the Finslerian Ricci-DeTurck flow with $\tilde{F}(0)=F_{0}$.
\end{thm}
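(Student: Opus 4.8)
The plan is to follow DeTurck's strategy: although the Finslerian Ricci flow $\partial_t g_{ij} = -2\,\mathrm{Ric}_{ij}$ is only weakly parabolic because of its invariance under the diffeomorphism group, the modified flow (\ref{22}) obtained by adding the symmetrized covariant derivative of a suitable vector field is strictly parabolic, and short-time existence then follows from the standard theory. I would fix the initial structure $F_{0}$ as a background, with associated connection coefficients $\bar\Gamma^{k}_{ij}$, and recall that the DeTurck vector field has the form $W^{k} = g^{ij}\bigl(\Gamma^{k}_{ij}-\bar\Gamma^{k}_{ij}\bigr)$, a section of the pulled-back tangent bundle over the indicatrix (sphere) bundle $SM$. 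Since $M$ is compact, $SM$ is compact as well, and all the geometric quantities---the fundamental tensor $g_{ij}(x,y)$, the Finslerian Ricci tensor, and $W$---are regarded as bundle-valued fields over $SM$ on which the flow acts.

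The first substantive step is to linearize the right-hand side of (\ref{22}) and to extract its principal symbol in the horizontal directions. The Finslerian Ricci tensor depends on second horizontal derivatives of $g_{ij}$; writing its linearization in a suitable frame and collecting the top-order terms, the symbol in a covector $\xi$ splits into a Laplacian-type piece $|\xi|^{2}_{g}$ and several degenerate pieces arising from the derivatives of the connection. The role of the DeTurck term $\nabla_{i}W_{j}+\nabla_{j}W_{i}$ is precisely to cancel these degenerate pieces: I would verify by direct computation that the principal symbol of the modified operator reduces to $|\xi|^{2}_{g}\,\mathrm{Id}$ acting on symmetric two-tensors, which is positive definite. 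This establishes that (\ref{22}) is a strictly parabolic quasilinear system in the horizontal variables.

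Granting strict parabolicity, the existence of a smooth short-time solution follows from the classical theory for nonlinear parabolic systems on compact manifolds. Concretely, I would linearize the flow about $F_{0}$, apply the Schauder (or $L^{p}$) estimates for the resulting linear strictly parabolic operator on $SM$, and run a contraction-mapping / inverse-function-theorem argument in suitable parabolic H\"older spaces to produce $\tilde{F}(t)$ for $t\in[0,T)$ with $\tilde{F}(0)=F_{0}$; smoothness then follows by parabolic bootstrapping, and uniqueness of the constructed solution from the contraction property.

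The main obstacle is the direction dependence intrinsic to the Finsler setting. Unlike the Riemannian case, where the symbol computation is purely over the base, here one must keep track of the fiber variable $y$ and confirm that only horizontal second-order derivatives contribute to the principal part, while the vertical ($y$-)derivatives enter at lower order and so leave the parabolic character unaffected. Equally delicate is transporting the parabolic existence theory to the compact bundle $SM$, which requires setting up function spaces and the linear theory adapted to bundle-valued fields over the sphere bundle rather than over $M$ itself; carrying out the symbol cancellation carefully within this bundle framework is where I expect the real work to lie.
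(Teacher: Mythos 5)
Your overall strategy---DeTurck's trick, cancellation of the degenerate second-order terms by the gauge vector field, and then standard parabolic theory on the compact sphere bundle $SM$---matches the paper's, and your symbol cancellation in the horizontal directions corresponds to the paper's explicit computation in which the cross terms of $-2\tilde F^2\mathcal{R}ic(\tilde g)$ and of $\mathcal{L}_\xi \tilde g_{pq}$ cancel, leaving only $y^py^q\tilde g^{mn}\delta_n\delta_m\tilde g_{pq}$ as the horizontal principal part. However, there are two genuine gaps. First, your claim that the vertical ($y$-)derivatives ``enter at lower order and so leave the parabolic character unaffected'' is exactly backwards as far as strict parabolicity on $SM$ is concerned: $SM$ is a $(2n-1)$-dimensional manifold, and an operator whose principal symbol is positive only on the $n$-dimensional horizontal subbundle is degenerate, not strictly parabolic, so the classical existence theory you invoke does not apply. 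The paper's key device here is to \emph{add} the vertical Laplacian term $\tilde F^2\,\tilde g^{mn}\,\partial^2\tilde g_{pq}/\partial y^m\partial y^n$, which vanishes identically upon contraction with $y^py^q$ by Euler's theorem (so the equation for $\tilde F^2$ is unchanged) but supplies the missing positivity in the vertical directions; only after this addition, and after rewriting both second-order blocks in the adapted frame $\{\hat e_a,\hat e_{n+\alpha}\}$ (Lemma \ref{mm}), does one obtain a strictly parabolic semi-linear system on $SM$. Your proposal has no mechanism to produce vertical ellipticity.

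Second, you never address the integrability issue: solving a parabolic system for a symmetric, positive-definite, zero-homogeneous two-tensor $\tilde g_{ij}(x,y,t)$ does not by itself yield a Finsler structure, since one must verify that $\tilde g_{ij}=\tfrac12\,\partial^2\tilde F^2/\partial y^i\partial y^j$ for some function $\tilde F$, equivalently that $\partial \tilde g_{ij}/\partial y^k$ remains totally symmetric along the flow. This has no Riemannian analogue and is handled in the paper by Lemma \ref{RE2}, which shows that if the right-hand side $\omega_{ij}$ of the evolution equation has totally symmetric vertical derivative, then the integrability condition propagates from $t=0$. Without this step your construction produces a family of fiber metrics on $\pi^*TM$, not a family of Finsler structures $\tilde F(t)$, so the statement of the theorem is not yet proved. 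A smaller remark: the theorem as stated (and as proved in the paper) claims only existence; uniqueness of the Ricci-DeTurck solution is deferred to the isotropic case (Remark \ref{remarkunique}), where a symmetry argument is needed to pass from the $y^py^q$-contracted equation back to the full tensor equation, so you should be cautious about asserting uniqueness from the contraction property in the general setting.
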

Then, a solution to the original Ricci flow is obtained by pulling back the solution to the Ricci-DeTurck flow via appropriate diffeomorphisms. This result leads to
\begin{thm} \label{main14}
Let $M$ be a compact Finsler manifold. Given any initial Finsler structure $F_{0}$, there exists a real number $T>0$ and a smooth one-parameter family of Finsler structures $F(t)$, for all $t\in[0,T)$ such that $F(t)$ is a solution to the Finslerian Ricci flow and $F(0)=F_{0}$.
\end{thm}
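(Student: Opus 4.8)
The plan is to derive Theorem \ref{main14} from Theorem \ref{main8} by the standard DeTurck trick: convert a solution of the Ricci-DeTurck flow into a solution of the genuine Ricci flow by pulling back along a time-dependent family of diffeomorphisms.

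Let me think about the structure. The Ricci-DeTurck flow is the Ricci flow plus an extra term that comes from a vector field W (constructed from the DeTurck term). The DeTurck term is essentially a Lie derivative of the metric along some vector field. So if $\tilde{g}(t)$ solves Ricci-DeTurck, then it evolves by $\partial_t \tilde{g} = -2\text{Ric} + \mathcal{L}_W \tilde{g}$ roughly. Then we solve an ODE $\partial_t \phi_t = -W \circ \phi_t$ (or similar sign) with $\phi_0 = \text{id}$, and the pullback $g(t) = \phi_t^* \tilde{g}(t)$ solves the pure Ricci flow because the Lie derivative term is cancelled by the pullback.

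In the Finsler setting, the objects live on the slit tangent bundle $TM_0$ (or the sphere bundle), and the "metric" is the Finsler structure $F$ (or its fundamental tensor $g_{ij}(x,y)$). The vector field $W$ defining the DeTurck term will be built from the difference between the Chern connection (or whatever connection the paper uses) of the evolving metric and a fixed background connection. The key point is that this $W$ is a genuine vector field on $M$ (not depending on the fiber direction $y$ in a problematic way), so that integrating it gives diffeomorphisms of $M$, which then lift to diffeomorphisms of $TM_0$ and pull back Finsler structures to Finsler structures.

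So the key steps in order:

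First, extract from the Ricci-DeTurck flow equation the vector field $W = W(t)$ that generates the extra DeTurck term; verify it is a well-defined time-dependent vector field on the compact manifold $M$, smoothly depending on $t \in [0,T)$. Second, invoke the standard existence theorem for the flow of a time-dependent vector field on a compact manifold to obtain a smooth one-parameter family of diffeomorphisms $\phi_t : M \to M$ with $\phi_0 = \mathrm{id}$ solving the appropriate ODE; their natural lifts $\hat{\phi}_t$ act on $TM_0$. Third, define $F(t)$ as the pullback of $\tilde{F}(t)$ under $\hat{\phi}_t$ and check that $F(t)$ is again a Finsler structure (positivity and strong convexity are preserved under pullback by a diffeomorphism) with $F(0) = F_0$. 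Fourth, differentiate $F(t) = \hat{\phi}_t^* \tilde{F}(t)$ in $t$; the time derivative splits into the explicit $\partial_t \tilde{F}$ term, which supplies $-2\text{Ric} + (\text{DeTurck term})$, plus the Lie-derivative term coming from the moving diffeomorphism, and these are arranged so that the DeTurck term is exactly cancelled, leaving the Finslerian Ricci flow equation.

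The main obstacle I expect is the Finslerian bookkeeping in the last step. Unlike the Riemannian case, the geometric quantities (Ricci tensor, the DeTurck vector field, the connection coefficients) depend on the fiber coordinate $y$, so one must be careful that the cancellation of the DeTurck term by the pullback holds at the level of the fundamental tensor $g_{ij}(x,y)$ for every $y$, and that the notion of "pulling back a Finsler structure by a diffeomorphism of the base" is the correct operation that commutes with taking the fundamental tensor and the Ricci curvature. Verifying the naturality of the Ricci tensor under diffeomorphisms in the Finsler category, and confirming that the DeTurck term is genuinely a Lie derivative of $F$ along a base vector field, are the delicate points; once these compatibility facts are in place, the cancellation proceeds formally as in the Riemannian proof of \cite{DeT}.
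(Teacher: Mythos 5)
Your overall strategy is the paper's: obtain $\tilde F(t)$ from Theorem \ref{main8}, integrate the DeTurck vector field to a one-parameter family of diffeomorphisms, and pull back $\tilde F(t)$ so that the Lie-derivative term in \eqref{22} cancels and \eqref{20} remains; the cancellation computation and the naturality of $\mathcal{R}ic$ under pullback that you single out as the delicate points are exactly what Proposition \ref{main11} and Lemma \ref{lemmohem} supply.

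There is, however, one substantive discrepancy. You assert that the DeTurck vector field ``is a genuine vector field on $M$ (not depending on the fiber direction $y$)'', to be integrated on $M$ and then lifted to $TM_0$. In the Finsler setting this is false: by \eqref{def;vect} the components $\xi^i=\tilde g^{pq}(x,y)\bigl(-\tilde\Gamma^i_{pq}(x,y)+\bar\Gamma^i_{pq}(x,y)\bigr)$ depend on $y$ and are only homogeneous of degree zero, so $\xi$ is a vector field on $SM$, not on $M$. One therefore cannot integrate it to diffeomorphisms of the base and lift; the paper instead integrates $\xi$ on the compact manifold $SM$ (Lemma \ref{main9}, Remark \ref{remark1}) and must then argue separately (Remark \ref{remark2}) that $\varphi_t^*(\tilde g_{pq})$ remains zero-homogeneous, symmetric and positive definite and hence still determines a Finsler structure. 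Your step claiming that ``positivity and strong convexity are preserved under pullback by a diffeomorphism'' of the base silently assumes the easy case that does not occur here. This is a fixable but genuine gap: as written, the flow in your second step cannot be set up on $M$, and the verification that the pullback is again a Finsler structure requires the homogeneity argument rather than being automatic.
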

The uniqueness of the solution to the Ricci flow can be demonstrated by the requirement that the Finsler manifolds be isotropic for the solution to be unique.
\begin{thm}\label{isotropic}
Let $M$ be a compact isotropic Finsler manifold. Given any initial Finsler structure $F_{0}$, there exists a real number $T>0$ and a smooth one-parameter family of Finsler structures $F(t)$, for all $t\in[0,T)$, such that $F(t)$ is a unique solution to the Finslerian Ricci flow and $F(0)=F_{0}$.
\end{thm}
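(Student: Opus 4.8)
The plan is to obtain uniqueness for the Finslerian Ricci flow from the strict parabolicity of the associated Ricci-DeTurck flow, by inverting the DeTurck correspondence used to prove Theorems \ref{main8} and \ref{main14}, in the spirit of \cite{DeT}. The governing principle is that, although the Ricci flow is only weakly parabolic and therefore not amenable to a direct uniqueness argument, the diffeomorphisms relating it to the strictly parabolic Ricci-DeTurck flow can be characterized intrinsically, so that two Ricci flow solutions with the same initial datum are forced to induce the same Ricci-DeTurck solution and the same gauge.

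First I would fix two solutions $F_1(t)$ and $F_2(t)$ of the Finslerian Ricci flow on $[0,T)$ with $F_1(0)=F_2(0)=F_0$. For each $a\in\{1,2\}$ I solve the Finslerian analogue of the harmonic-map heat flow for a family of diffeomorphisms $\varphi^a_t$ of $M$, with $\varphi^a_0=\mathrm{id}$, adapted to $F_a(t)$ and a fixed background structure. As in the proof of Theorem \ref{main8}, the pushed-forward structures $\tilde{F}_a(t):=(\varphi^a_t)_*F_a(t)$ then solve the Finslerian Ricci-DeTurck flow, and they share the initial datum $\tilde{F}_a(0)=F_0$. Since that flow is strictly parabolic, uniqueness for its initial value problem gives $\tilde{F}_1(t)=\tilde{F}_2(t)=:\tilde{F}(t)$ for all $t\in[0,T)$.

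The crux is then to show that the gauge transformations $\varphi^a_t$ coincide, and this is where isotropy enters. The defining equation for $\varphi^a_t$ can be rewritten as the flow ODE $\partial_t\varphi^a_t=-W(t)\circ\varphi^a_t$ generated by the DeTurck vector field $W(t)$ of the Ricci-DeTurck solution $\tilde{F}(t)$, which depends on $\tilde{F}(t)$ and the background alone and hence is common to both values of $a$. On a general Finsler manifold, however, $W(t)$ depends on the fibre direction $y$ and therefore does not descend to a vector field on $M$; this is precisely the obstruction responsible for the failure of uniqueness for $n>2$ noted in \cite{BKS3}. Under the isotropy hypothesis the direction-dependent part of $W(t)$ degenerates, so that $W(t)$ projects to a genuine time-dependent vector field on $M$, and the ODE $\partial_t\varphi^a_t=-W(t)\circ\varphi^a_t$, $\varphi^a_0=\mathrm{id}$, has a unique solution by the standard theory of ordinary differential equations. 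Therefore $\varphi^1_t=\varphi^2_t$, and pulling back the common Ricci-DeTurck solution gives $F_1(t)=(\varphi^1_t)^*\tilde{F}(t)=(\varphi^2_t)^*\tilde{F}(t)=F_2(t)$.

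The principal obstacle is to make rigorous the reduction just indicated: to prove that the isotropy condition forces the Finslerian DeTurck vector field to be projectable onto $M$, and, equally importantly, that isotropy is preserved along the Ricci-DeTurck flow so that this projectability holds throughout $[0,T)$ rather than merely at the initial time. Once these two facts are established, together with the identification of the harmonic-map-type flow with the DeTurck ODE, the remaining ingredients -- strict-parabolic uniqueness for the Ricci-DeTurck equation and classical uniqueness for the generating ODE -- are standard.
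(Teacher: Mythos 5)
Your overall architecture matches the paper's: take two Ricci flow solutions with the same initial datum, gauge each by a harmonic-map-type flow of diffeomorphisms into a solution of the Ricci--DeTurck flow (Proposition \ref{main12}), invoke uniqueness for the Ricci--DeTurck flow, and then identify the two gauge families by ODE uniqueness. (The paper runs this via an infimum-of-disagreement-times contradiction, basing the diffeomorphisms at $\varphi^a_\tau=Id$, but that is a cosmetic difference.) The genuine gap is that you have located the isotropy hypothesis in the wrong step. You assert without argument that ``since that flow is strictly parabolic, uniqueness for its initial value problem gives $\tilde F_1(t)=\tilde F_2(t)$'' for an arbitrary Finsler structure, and then invoke isotropy only to make the DeTurck vector field projectable to $M$. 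But the DeTurck vector field $\xi=(\Phi_{\tilde g(t),h}Id)^i\frac{\partial}{\partial x^i}$ of \eqref{def;vect} is zero-homogeneous in $y$ and hence is already a well-defined vector field on $SM$; the gauge diffeomorphisms live on $SM$ (Lemma \ref{main9}, Remark \ref{remark1}), and the ODE $\partial_t\varphi_t=\xi(\varphi_t,t)$ has a unique solution there with no isotropy assumption. So your argument, if it worked, would prove uniqueness of the Ricci flow for every compact Finsler manifold, contradicting the failure of uniqueness for $n>2$ recorded in \cite{BKS3}.

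The step that actually needs isotropy is the one you took for granted. The Finslerian Ricci--DeTurck flow \eqref{22} is a \emph{scalar} equation for $\tilde F^2=y^py^q\tilde g_{pq}$; written out it has the form \eqref{36+111}, namely $y^py^q\bigl(\partial_t\tilde g_{pq}-\tilde g^{mn}\delta_n\delta_m\tilde g_{pq}-\tilde F^2\tilde g^{mn}\partial^2_{y^ny^m}\tilde g_{pq}+\cdots\bigr)=0$, i.e.\ only the restriction of a $(0,2)$-tensor to the diagonal vanishes. Parabolic uniqueness applies to the full tensor system \eqref{36+1111}, and a solution of the contracted scalar equation need not satisfy the tensor system. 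It is exactly here that isotropy enters in Remark \ref{remarkunique}: isotropy makes $R^{\,\,n}_{q\,\,np}$ symmetric in $p$ and $q$, hence the expression in parentheses is a symmetric bilinear form vanishing on the diagonal, and the polarization identity forces it to vanish identically; only then is the scalar DeTurck equation equivalent to the strictly parabolic tensor system, and only then does one obtain uniqueness of $\tilde F(t)$. Your proposal needs to replace the projectability discussion with this polarization argument (or an equivalent justification of Ricci--DeTurck uniqueness under isotropy); as written, the claim that isotropy degenerates the $y$-dependence of the DeTurck vector field is both unsubstantiated and not the operative mechanism.
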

\section{Preliminaries and notations}
\subsection{Chern connection; A global approach}
Let $M$ be a real $n$-dimensional smooth manifold, and denote by $TM$ the tangent bundle of tangent vectors,  by  $\pi :TM_{0}\longrightarrow M$ the fiber bundle of non-zero tangent vectors and by $\pi^*TM\longrightarrow TM_0$ the pullback tangent bundle. Let $F$ be a Finsler structure on $TM_{0}$, and $g$ the related Finslerian metric \cite{BCS}. A \emph{Finsler manifold} is denoted here by the pair $(M,F)$. Any point of $TM_0$ is denoted by $z=(x,y)$, where $x=\pi z\in M$ and $y\in T_{x}M$.

Let us denote by $TTM_0$, the tangent bundle of $TM_0$ and by $\rho$, the canonical linear mapping $\rho:TTM_0\longrightarrow \pi^*TM,$ where, $ \rho=\pi_*$. For all $z\in TM_0$, $V_zTM$ is the set of all vertical vectors at $z$, that is, the set of vectors which are tangent to the fiber through $z$. Consider the decomposition $TTM_0=HTM\oplus VTM$, which allows to uniquely represent a vector field $\hat{X}\in {\cal X}(TM_0)$ as the sum of the horizontal and vertical parts namely, $\hat{X}=H\hat{X}+V\hat{X}$. The corresponding basis is denoted here by $\{\frac{\delta}{\delta {x^i}},\frac{\partial}{\partial y^{i}}\}$, where, $\frac{\delta}{\delta {x^i}}:=\frac{\partial}{\partial x^{i}}-N_{i}^{j}\frac{\partial}{\partial y^{j}}$, $N^{j}_{i}=\frac{1}{2}\frac{\partial G^j}{\partial y^i}$ and $G^i$ are the spray coefficients defined by $G^{i}=\frac{1}{4}g^{ih}(\frac{\partial^{2}F^{2}}{\partial y^{h}\partial x^{j}}y^{j}-\frac{\partial F^{2}}{\partial x^{h}})$. We denote the \emph{formal Christoffel symbols} by $\gamma^{i}_{jk}=\frac{1}{2}g^{ih}(\partial_{j}g_{hk}+\partial_{k}g_{jh}-\partial_{h}g_{jk})$ where, $\partial_{k}=\frac{\partial}{\partial x^k}$. The dual bases are denoted by $\{dx^{i},\delta y^{i}\}$ where, $\delta y^{i}:=dy^{i}+N_{j}^{i}dx^{j}$. Let us denote a global representation of the Chern connection by $\nabla:{\cal X}(TM_0)\times\Gamma(\pi^{*}TM)\longrightarrow\Gamma(\pi^{*}TM)$. Consider the linear mapping
$\mu:TTM_0\longrightarrow \pi^*TM,$ defined by $\mu(\hat{X})=\nabla_{\hat{X}}{\bf y}$ where, $\hat{X}\in TTM_0$ and ${\bf y}=y^i\frac{\partial}{\partial x^i}$ is the canonical section of $\pi^*TM$.

The connection 1-forms of Chern connection in these bases are given by $\omega^{i}_{j}=\Gamma^{i}_{jk}dx^{k}$ where, $\Gamma^{i}_{jk}=\frac{1}{2}g^{ih}(\delta_{j}g_{hk}+\delta_{k}g_{jh}-\delta_{h}g_{jk})$ and $\delta_{k}=\frac{\delta}{\delta x^{k}}$. In the sequel, all the vector fields on $TM_0$ are decorated with a hat and denoted by $\hat{X}$, $\hat{Y}$ and $\hat{Z}$, and the corresponding sections of $\pi^*TM$ by $X=\rho(\hat{X})$, $Y=\rho(\hat{Y})$ and $Z=\rho(\hat{Z})$, respectively unless otherwise specified.
The torsion freeness and almost metric compatibility of the Chern connection are given by
\begin{eqnarray}
&&\nabla_{\hat{X}}Y-\nabla_{\hat{Y}}X=\rho[\hat{X},\hat{Y}],\label{tori}\\
&&(\nabla_{\hat{Z}}g)(X,Y)=2C(\mu(\hat{Z}),X,Y),\label{gcomp}
\end{eqnarray}
respectively, where $C$ is the Cartan tensor with the components $C_{ijk}=\frac{\partial g_{ij}}{\partial y^{k}}.$ In  local coordinates on $TM$ the \emph{Chern horizontal} and \emph{vertical covariant derivatives} of an arbitrary $(1,2)$ tensor field $S$ on $\pi^{*}TM$ with the components $S^{i}_{jk}(x,y)$ on $TM$ are given by
\begin{eqnarray*}
&&\nabla_{l}S^{i}_{jk}:= \delta_{l}S^{i}_{jk}-S^{i}_{s k}\Gamma^{s}_{jl}-S^{i}_{js}\Gamma^{s}_{kl}+S^{s}_{jk}\Gamma^{i}_{s l}, 
 \\
&&\dot{\nabla}_{l}S^{i}_{jk}:=\dot{\partial}_{l}S^{i}_{jk},
\end{eqnarray*}
where, $\nabla_{l}:=\nabla_{\frac{\delta}{\delta x^l}}$ and $\dot{\nabla}_{l}:=\nabla_{\frac{\partial}{\partial y^l}}$. Horizontal metric compatibility of the Chern connection in local coordinates  translates into $\nabla_{l}g_{jk}=0$, see \cite[p.\ 45]{BCS}. The local \emph{Chern $hh$-curvature} tensor is given by
\begin{equation} \label{77}
R^{\,\,i}_{j\,\,kl}=\delta_{k}\Gamma^{i}_{\,jl}-\delta_{l}\Gamma^{i}_{\,jk}+
\Gamma^{i}_{\,hk}\Gamma^{h}_{\,jl}-\Gamma^{i}
_{\,hl}\Gamma^{h}_{\,jk},
\end{equation}
see \cite[p.\ 52]{BCS}.
In the isotropic case, the contracted $hh$-curvature tensor $R_{j\,\,il}^{\,\,i}$ is symmetric in two indices $j$ and $l$. In fact, by means of the symmetry of $R_{j\,\,il}^{\,\,i}$ in Cartan connection in the isotropic case, see \cite[p.\ 152]{HAZ}, and the relation between the Cartan and Chern $hh$-curvature tensors,  one can see that in the isotropic case, $R_{j\,\,il}^{\,\,i}$ is symmetric in $j$ and $l$.
 More intuitively, it can be part of a symmetric quadratic form.

The \emph{reduced $hh$-curvature} tensor is a connection free tensor field which is also referred to as the \emph{Riemann curvature} by certain authors. In local coordinates on $TM$, the components of the reduced $hh$-curvature tensor are given by $R^{i}_{\,\,k}:=\frac{1}{F^2}y^{j}R^{\,\,i}_{j\,\,km}y^{m}$, which are entirely expressed in terms of $x$ and $y$ derivatives of spray coefficients $G^{i}$ as follows
\begin{equation} \label{18}
R^{i}_{\,\,k}:=\frac{1}{F^2}(2\frac{\partial G^{i}}{\partial x^{k}}-\frac{\partial^{2}G^{i}}{\partial x^{j}\partial y^{k}}y^{j}+2G^{j}\frac{\partial^{2}G^{i}}{\partial y^{j}\partial y^{k}}-\frac{\partial G^{i}}{\partial y^{j}}\frac{\partial G^{j}}{\partial y^{k}}),
\end{equation}
see \cite[p.\ 66]{BCS}.

 Let $(x,y)$ be an element of $TM$ and $P(y,X)\subset T_{x}(M)$  a 2-plane
  generated by the vectors $y$ and $X$ in
$T_{x}(M) $. Then the \emph{flag curvature} $K(x,y,X)$ with respect
to  the plane $P(y,X)$ at a point $x\in M$ is defined by
$$K(x,y,X):=\frac{g(R(X,y)y,X)}{g(X,X)g(y,y)-g(X,y)^{2}},$$
where $R(X,y)y$ is the $hh$-curvature tensor. If
$K$ is independent of $X$, then the Finsler manifold $(M,F)$ is called \emph{isotropic} or \emph{space of scalar curvature}. If $K$ has no dependence on $x$ or $y$,
then the Finsler manifold is said to be of \emph{constant curvature}.

\subsection{Lie derivatives of Finsler metrics}
The Lie derivative of an arbitrary Finslerian $(0,2)$ tensor field ${\cal T}={\cal T}_{jk}(x,y)dx^{j}\otimes dx^{k}$ on $\otimes^{2}\pi^{*}TM$ with respect to an arbitrary vector field $\hat V$ on $TM_0$ is given by
\beq
(\mathcal{L}_{\hat{V}}{\cal T})(X,Y)=\hat{V}({\cal T}(X,Y))-{\cal T}(\rho[\hat V,\hat X],Y)-{\cal T}(X,\rho[\hat V,\hat Y]),\nonumber
\eeq
where, $\rho(\hat{X})=X$, $\rho(\hat{Y})=Y$ and $\hat{X},\hat{Y}\in T_{z}TM_{0}$, see \cite{JB}. The Lie derivative of a Finsler metric $g$ in direction of an arbitrary vector field $\hat V$ on $TM_0$ is given by
\beq
(\mathcal{L}_{\hat{V}}{g})(X,Y)=\hat{V}({g}(X,Y))-{g}(\rho[\hat V,\hat X],Y)-{g}(X,\rho[\hat V,\hat Y]).\nonumber
\eeq
Using the torsion freeness of Chern connection defined by (\ref{tori}), the Lie derivative of the Finsler metric $g$ becomes
\begin{eqnarray}
(\mathcal{L}_{\hat{V}}{g})(X,Y)&=&\hat{V}(g(X,Y))-g(\nabla_{\hat{V}}X-\nabla_{\hat{X}}V,Y)-g(X,\nabla_{\hat{V}}Y-\nabla_{\hat{Y}}V)\nonumber\\
&=&\hat{V}(g(X,Y))-g(\nabla_{\hat{V}}X,Y)+g(\nabla_{\hat{X}}V,Y)\nonumber\\
&&-g(X,\nabla_{\hat{V}}Y)+g(X,\nabla_{\hat{Y}}V).\label{Liederiv}
\end{eqnarray}
By the almost $g$-compatibility of Chern connection defined by (\ref{gcomp}), we have
\begin{equation*}
2C(\mu(\hat{V}),X,Y)=(\nabla_{\hat{V}}g)(X,Y)=\hat{V}(g(X,Y))-g(\nabla_{\hat{V}}X,Y)-g(X,\nabla_{\hat{V}}Y).
\end{equation*}
Therefore,
\begin{equation}\label{campatibility}
\hat{V}(g(X,Y))=2C(\mu(\hat{V}),X,Y)+g(\nabla_{\hat{V}}X,Y)+g(X,\nabla_{\hat{V}}Y).
\end{equation}
Plugging the equation (\ref{campatibility}) in (\ref{Liederiv}) we obtain
\begin{equation}\label{finalliederiv}
(\mathcal{L}_{\hat{V}}{g})(X,Y)=2C(\mu(\hat{V}),X,Y)+g(\nabla_{\hat{X}}V,Y)+g(X,\nabla_{\hat{Y}}V).
\end{equation}
Replacing $X$ and $Y$ by the canonical section ${\bf y}=y^{i}\frac{\partial}{\partial x^{i}}$ in (\ref{finalliederiv}) we obtain
\begin{equation*}
(\mathcal{L}_{\hat{V}}{g})({\bf y},{\bf y})=2C(\mu(\hat{V}),{\bf y},{\bf y})+g(\nabla_{\hat{{\bf y}}}V,{\bf y})+g({\bf y},\nabla_{\hat{{\bf y}}}V),
\end{equation*}
where, $\hat{{\bf y}}=y^{i}\frac{\delta}{\delta x^{i}}$. Using $C(\mu(\hat{V}),{\bf y},{\bf y})=0$, see \cite[p.\ 23]{BCS}, and the symmetric property of $g(\nabla_{\hat{{\bf y}}}V,{\bf y})$ one arrives at
\begin{equation}\label{global}
(\mathcal{L}_{\hat{V}}{g})({\bf y},{\bf y})=2g({\bf y},\nabla_{\hat{{\bf y}}}V),
\end{equation}
where, $V=v^i\frac{\partial}{\partial x^i}$ is a section of $\pi^{*}TM$. In local coordinates, the above equation translates into
\begin{equation*}
y^iy^j\mathcal{L}_{\hat{V}}g_{ij}=2y^iy^jg_{ik}\nabla_{j}v^{k}.
\end{equation*}
Using $\nabla_{j}g_{ik}=0$, we obtain
\begin{equation}\label{FINAL}
y^iy^j\mathcal{L}_{\hat{V}}g_{ij}=2y^iy^j\nabla_{j}v_{i},
\end{equation}
where, $v_{i}=g_{ik}v^{k}$.
\subsection{A geometric setup on $SM$}
Let $(M,F)$ be a Finsler manifold and $SM$ the quotient of $TM_0$ under the following equivalence relation: $(x,y)\sim(x,\tilde{y})$ if and only if $y, \tilde{y}$ are positive multiples of each other. In other words, $SM$ is the bundle of all directions or rays, and is called the (projective) \emph{sphere bundle}. The local coordinates $x^{1},...,x^{n}$ on $M$ induce global coordinates $y^1,...,y^n$ on each fiber $T_{x}M$, through the expansion $y=y^{i}\frac{\partial}{\partial x^i}$. Therefore $(x^i;y^i)$ is a coordinate system on $SM$, where the coordinates $y^i$ are regarded as homogeneous coordinates in the projective space. Using the canonical projection $p:SM\longrightarrow M$, one can pull the tangent bundle $TM$ back to $p^{*}TM$ which is a vector bundle with the fiber dimension $n$ over the $(2n-1)$-manifold $SM$. The vector bundle $p^{*}TM$ has a global section $l:=\frac{y^i}{F(y)}\frac{\partial}{\partial x^i}$ and a natural Riemannian metric which we here denote by $g:=g_{ij}(x,y)dx^{i}\otimes dx^{j}$.

Let $\{e_{a}=u_{a}^{i}\frac{\partial}{\partial x^i}\}$ be a $g$-orthonormal frame for $p^{*}TM$ with $e_{n}:=l$ and $\{\omega^a=v^{a}_{i}dx^i\}$ be its coframe; thus $\omega^{a}(e_{b})=\delta^{a}_{b}$. It is clear that $\omega^{n}=\frac{\partial F}{\partial y^i}dx^{i}$ which is a global section of $p^{*}T^{*}M$. Also we have $\frac{\partial}{\partial x^{i}}=v^{a}_{i}e_{a}$ and $dx^{i}=u^{i}_{a}\omega^{a}$. A basic relation between $(u^i_a)$ and $(v^a_i)$ is given by $v^a_iu_b^i=\delta^a_b$ and $u^i_av^a_j=\delta^i_j$.
For convenience, we shall also regard the $e_{a}$'s and $\omega^a$'s as local vector fields and 1-forms, respectively on $SM$. All $p^{*}TM$ related indices are raised and lowered with the metric $g$, see \cite{BAOS}. Let
\begin{eqnarray*}
&\hat{e}_{a}=u^{i}_{a}\frac{\delta}{\delta x^{i}},\quad
\hat{e}_{n+a}=u^{i}_{a}F\frac{\partial}{\partial y^i}.\\
&\omega^{a}=v^{a}_{i}dx^{i},\quad
\omega^{n+a}=v^{a}_{i}\frac{\delta y^i}{F}.
\end{eqnarray*}
Also, recall that the Latin indices run over the range $1,...,n$ and the Greek indices run from $1$ to $n-1$.
It can be shown that $\{\hat{e}_{a},\hat{e}_{n+\alpha}\}$ is a local basis for the tangent bundle $TSM$ and $\{\omega^{a},\omega^{n+\alpha}\}$ is a local basis for the cotangent bundle $T^{*}SM$. Tangent vectors on $SM$ which are annihilated by all $\{\omega^{n+\alpha}\}$'s form the horizontal sub-bundle $HSM$ of $TSM$. The fibers of $HSM$ are $n$-dimensional. On the other hand, let $VSM:=\cup_{x\in M}T(S_{x}M)$ be the vertical sub-bundle of $TSM$; its fibers are $n-1$ dimensional. The decomposition $TSM=HSM\oplus VSM$ holds because $HSM$ and $VSM$ are direct summands. The sphere bundle $SM\subset TM$ is a $(2n-1)$-dimensional Riemannian manifold equipped with the induced Riemannian metric
\begin{eqnarray*}
\hat{g}:=
\delta_{ab}\omega^{a}\otimes \omega^{b}+\delta_{\alpha\beta}\omega^{n+\alpha}\otimes\omega^
{n+\beta}.
\end{eqnarray*}
In particular, $HSM$ and $VSM$ are orthogonal with respect to $\hat{g}$.
\subsection{Integrability condition Lemma}
Here, we first recall that not all Riemannian metrics on the fibers of the pulled-back bundle come from a Finsler structure.
Eventually, any arbitrary symmetric positive-definite $(0,2)$-tensor field $g_{ij}(x,y)$ does not arise from a Finsler structure $F(x,y)$.
Intuitively, to ensure that $g_{ij}(x,y)$ are components of a Finsler structure,
the essential integrability criterion is the total symmetry of $(g_{ij})_{y^k}:=\frac{\p g_{ij}}{\p y^k}$
on all the three indices $i, j$, and $k$.
 In fact, $g_{ij}(x,y)$ follows from a Finsler structure $F(x,y)$ if and only if $(g_{ij})_{y^k}$ is totally symmetric in its three indices, see \cite[p. 56]{Bao}. Symmetry of ${({g_{ij}})_{{y^k}}}$ on all the three indices $i, j,$ and $ k$ is known in the literature as \emph{integrability condition}. Moreover, we must ensure the integrability criterion is satisfied in every step along with the Ricci flow.
To this end we consider a general evolution equation given by
\begin{equation}\label{AR}
\frac{\partial}{\partial t}g(t)=\omega(t),\quad g(0):=g_{0},
\end{equation}
where, $\omega(t):=\omega(t, x, y)$ is a family of symmetric $(0,2)$-tensors on $\pi^{*}TM$, zero-homogenous with respect to $y$ and $(\omega_{ij})_{y_k}=\frac{\partial\omega_{ij}}{\partial y^k}$ be totally symmetric in the three indices $i,j$ and $k$. The following Lemma establishes the integrability condition, see also \cite[p. 749]{YB2}.
\begin{lemma}\label{RE2}
Let $g(t)$ be a solution  to the evolution equation (\ref{AR}).
 There is a family of Finsler structures $F(t)$ on $TM$ such that,
\begin{equation}\label{Eq;IntCond}
g_{ij}(t)=\frac{1}{2}\frac{\partial^2 F(t)}{\partial y^i\partial y^j}.
\end{equation}
\end{lemma}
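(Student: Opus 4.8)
The plan is to reduce the claim to the integrability criterion recalled just above the statement and to show that this criterion, which holds at $t=0$, is propagated along the flow (\ref{AR}). Since the initial datum $g_0$ is the fundamental tensor of $F_0$, it is zero-homogeneous in $y$ and $\partial g_{ij}(0)/\partial y^k$ is totally symmetric in $i,j,k$. I would aim to verify that for every $t$ in a short interval $[0,T)$ the tensor $g_{ij}(t)$ remains symmetric, zero-homogeneous and positive-definite, with $\partial g_{ij}(t)/\partial y^k$ totally symmetric; the cited result \cite[p.\ 56]{Bao} then produces the desired Finsler structure $F(t)$.

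First I would write the solution as $g(t)=g_0+\int_0^t\omega(s)\,ds$. Because each $\omega(s)$ is symmetric and zero-homogeneous in $y$, and these properties survive integration in $t$, the metric $g(t)$ stays symmetric and zero-homogeneous. Positive-definiteness of $g_0$ is an open condition and $g(t)$ varies smoothly with $t$ on the compact manifold $SM$, so there is $T>0$ with $g(t)$ positive-definite for all $t\in[0,T)$.

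The core of the argument is the persistence of the integrability condition. I would introduce the symmetry defect
\[
D_{ijk}(t):=\frac{\partial g_{ij}(t)}{\partial y^k}-\frac{\partial g_{ik}(t)}{\partial y^j},
\]
which measures the failure of $\partial g_{ij}/\partial y^k$ to be symmetric in $j$ and $k$ (symmetry in $i,j$ being automatic from $g_{ij}=g_{ji}$). Commuting the $t$- and $y^k$-derivatives and using (\ref{AR}),
\[
\frac{\partial}{\partial t}D_{ijk}(t)=\frac{\partial\omega_{ij}(t)}{\partial y^k}-\frac{\partial\omega_{ik}(t)}{\partial y^j}=0,
\]
the last equality being exactly the hypothesis that $\partial\omega_{ij}/\partial y^k$ is totally symmetric. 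Hence $D_{ijk}(t)=D_{ijk}(0)=0$, because $g_0$ is Finslerian, and so $\partial g_{ij}(t)/\partial y^k$ is totally symmetric for all $t\in[0,T)$.

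Combining the three properties, each $g(t)$ satisfies the integrability criterion and is therefore the fundamental tensor of a Finsler structure, which can be exhibited explicitly as $F(t)=\sqrt{g_{ij}(t)\,y^iy^j}$; a short computation using zero-homogeneity (which gives $(\partial g_{ij}/\partial y^k)\,y^k=0$) together with the total symmetry just established recovers $g_{ij}(t)=\tfrac12\frac{\partial^2(F(t)^2)}{\partial y^i\partial y^j}$, and smooth dependence of $F(t)$ on $t$ follows from that of $g(t)$. The hard part is the preservation of total symmetry: it is precisely here that the structural hypothesis on $\omega$ is indispensable, whereas the homogeneity and positivity statements are routine.
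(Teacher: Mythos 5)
Your argument is correct, but it follows the route the paper relegates to the remark immediately after the lemma rather than the route of the main proof. You propagate the integrability condition itself: the symmetry defect $D_{ijk}=(g_{ij})_{y^k}-(g_{ik})_{y^j}$ has $\partial_t D_{ijk}=(\omega_{ij})_{y^k}-(\omega_{ik})_{y^j}=0$ by the hypothesis on $\omega$, so $D_{ijk}(t)=D_{ijk}(0)=0$, and you then invoke the criterion of \cite[p.~56]{Bao} (or the short Euler-theorem computation you sketch) to recover $F(t)=\bigl(g_{ij}(t)y^iy^j\bigr)^{1/2}$. The paper's main proof instead never isolates the defect tensor: it sets $F^2:=y^iy^jg_{ij}(t)=F^2(0)+\int_0^t y^iy^j\omega_{ij}(\tau)\,d\tau$, differentiates twice in $y$, and uses the total symmetry and zero-homogeneity of $(\omega_{ij})_{y^k}$ to show directly that $\tfrac12\partial^2_{y^ky^l}\bigl(y^iy^j\omega_{ij}\bigr)=\omega_{kl}$, whence $\tfrac12\partial^2_{y^ky^l}F^2=g_{kl}$ with no appeal to the general integrability criterion as a black box. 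The two arguments use the same structural input on $\omega$ and are essentially equivalent in content; yours is slightly cleaner conceptually and also makes explicit the (routine but worth stating) persistence of symmetry, zero-homogeneity and positive-definiteness, while the paper's computation has the advantage of producing the identity $g_{kl}=\tfrac12\partial^2_{y^ky^l}F^2$ by direct calculation. Note that the displayed formula in the lemma statement should read $F^2(t)$ rather than $F(t)$; your version is consistent with what is actually proved.
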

 \bpf
Let $M$ be a compact differential manifold, $F(t)$ a family of smooth 1-parameter Finsler structures on $TM_0$ and $g(t)$ the Hessian matrix of $F(t)$ which defines a scalar product on $\pi^{*}TM$ for every $t$.
Let $g(t)$ be a solution to the evolution equation (\ref{AR}). We have
\begin{equation}\label{ARR}
g(t)=g(0)+\int_{0}^{t}\omega(\tau)d\tau, \qquad \forall\tau\in[0,t).
\end{equation}
We show that the metric $g(t)$ satisfies the integrability condition, or equivalently there is a Finsler structure $F(t)$ on $TM_0$ satisfying \eqref{Eq;IntCond}.
For this purpose, we multiply  $g_{ij}$ by $y^i$ and $y^j$ in (\ref{ARR}),
\begin{equation*}
y^iy^jg_{ij}(t)=y^iy^jg_{ij}(0)+\int_{0}^{t}y^iy^j\omega_{ij}(\tau)d\tau.
\end{equation*}
By means of the initial condition $y^iy^jg_{ij}(0)= F^2(0)$, we get
\begin{equation}\label{EQ1}
y^iy^jg_{ij}(t)=F^2(0)+\int_{0}^{t}y^iy^j\omega_{ij}(\tau)d\tau.
\end{equation}
By positive definiteness assumption of $g_{ij}$, we put $F = (y^iy^jg_{ij})^{\frac{1}{2}}$. Twice vertical derivatives of (\ref{EQ1}) yields
\begin{equation}\label{EQ2}
\frac{1}{2}\frac{\partial^2 F^2}{\partial y^k\partial y^l}=g_{kl}(0)+\frac{1}{2}\int_{0}^{t}\frac{\partial^2}{\partial y^k\partial y^l}(y^iy^j\omega_{ij}(\tau))d\tau.
\end{equation}
On the other hand, by straightforward calculation we have
\begin{equation}\label{EQ3}
\frac{1}{2}\frac{\partial^2}{\partial y^k\partial y^l}(y^iy^j\omega_{ij}(\tau))=\frac{1}{2}\frac{\partial^2\omega_{ij}(\tau)}{\partial y^k\partial y^l}y^iy^j+\Big(\frac{\partial\omega_{ik}(\tau)}{\partial y^l}-\frac{\partial\omega_{il}(\tau)}{\partial y^k}\Big)y^i+\omega_{kl}(\tau),
\end{equation}
for all $\tau\in[0,t)$. Since $(\omega_{ij})_{y_k}$ is totally symmetric in three indices $i,j$ and $k$,  we obtain
\begin{equation*}
\frac{1}{2}\frac{\partial^2\omega_{ij}(\tau)}{\partial y^k\partial y^l}y^iy^j=0,\quad y^{i}\frac{\partial\omega_{ik}(\tau)}{\partial y^l}=0,\quad y^{i}\frac{\partial\omega_{il}(\tau)}{\partial y^k}=0.
\end{equation*}
Therefore, (\ref{EQ3}) is reduced to
\begin{equation}\label{EQ4}
\frac{1}{2}\frac{\partial^2}{\partial y^k\partial y^l}(y^iy^j\omega_{ij}(\tau))=\omega_{kl}(\tau),
\end{equation}
for all $\tau\in[0,t)$. Finally, replacing (\ref{EQ4}) in (\ref{EQ2}) we get
\begin{equation*}
\frac{1}{2}\frac{\partial^2 F^2}{\partial y^k\partial y^l}=g_{kl}(0)+\int_{o}^{t}\omega_{kl}(\tau)d\tau=g_{kl}.
\end{equation*}
Therefore, every $g_{ij}(t)$ on the fibers of the pulled-back bundle, arises from a Finsler structure, which completes the proof.
\epf
\begin{rem}
Let $g(t)$ be a solution to the evolution equation (\ref{AR}). The verification of the integrability of $g_{ij}(t)$ can also be done more simply.  To verify that $(g_{ij})_{y_k}=(g_{ik})_{y_j}$, consider  $(g_{ij})_{y_k}-(g_{ik})_{y_j}$, take the partial derivative $\partial_t$ of the both terms and insert it in the evolution equation (\ref{AR}). Therefore, from the total symmetry of $(\omega_{ij})_{y_k}$ in $i,j$ and $k$  we have
\begin{equation*}
\partial_t\Big((g_{ij})_{y_k}-(g_{ik})_{y_j}\Big)=
(w_{ij})_{y_k}-(w_{ik})_{y_j}=0.
\end{equation*}
Thus $(g_{ij})_{y_k}-(g_{ik})_{y_j}$ does not depend on $t$. At $t=0$, $g_{ij}(0)$ is the fundamental tensor of the initial data $F_0$, so $(g_{ij})_{y_k}-(g_{ik})_{y_j}=0$ at $t=0$.  The same is true for all $ t$.
\end{rem}
\section{Semi-linear parabolic equations on $SM$}
Recall that a \emph{quasi-linear} system is a system of partial differential equations where, the derivatives of the principal  order terms occur only linearly and  coefficients can depend on the derivatives of the lower order terms. It is called \emph{semi-linear} if it is quasi-linear and coefficients of the principal order terms depend only on the independent variables, but not on the solution, see \cite[p.\ 45]{Rog}. Let $M$ be an $n$-dimensional smooth manifold and $u:M\longrightarrow \mathbb{R}$ a smooth function on $M$. A \emph{semi-linear strictly parabolic} equation is a PDE of the form
\begin{eqnarray*}
\frac{\partial u}{\partial t}=a^{ij}(x,t)\frac{\partial^2 u}{\partial x^i\partial x^j}+h(x,t,u,\frac{\partial u}{\partial x^i}),
\end{eqnarray*}
where, $a^{ij}$ and $h$ are smooth functions on $M$ and for some constant $\lambda>0$ we have the parabolic assumption
\begin{eqnarray*}
a^{ij}\xi_{i}\xi_{j}\geq \lambda\parallel \xi\parallel^{2},\quad 0\neq\xi\in\chi(M),
\end{eqnarray*}
that is, all  eigenvalues of $A=(a^{ij})_{2\times2}$ have positive signs or equivalently,  $A$ is positive definite.
\begin{defn}\label{semipar}
Let $M$ be an $n$-dimensional smooth manifold and $\phi:SM\longrightarrow \mathbb{R}$ a smooth function on the sphere bundle $SM$. Consider the following  strictly parabolic semi-linear equation on $SM$;
\begin{equation*}
\frac{\partial \phi}{\partial t}=G^{_{AB}}(x,y,t)\hat{e}_{_A}\hat{e}_{_B}\phi+h(x,y,t,\rho,\hat{e}_{_A}\phi),\qquad A,B=1,2,...,(2n-1),
\end{equation*}
where, $\hat{e}_{_A}$ is a local frame for the tangent bundle $TSM$ and stand for partial derivatives on $SM$. Here, $G^{_{AB}}$ and $h$ are smooth functions on $SM$ and $G=(G^{_{AB}})$ is positive definite.
\end{defn}
More precisely, a strictly parabolic semi-linear system on $SM$ can be written in the following form.
\begin{align}\label{po}
\frac{\partial \phi}{\partial t}=&p^{ab}(x,y,t)\hat{e}_{a}\hat{e}_{b}\phi +q^{\alpha\beta}(x,y,t)\hat{e}_{n+\alpha}
\hat{e}_{n+\beta}\phi+m^{a\alpha}(x,y,t)\hat{e}_{a}\hat{e}_{n+\beta}\phi\nonumber\\
&+\textrm{lower order terms},
\end{align}
where the Latin indices $a,b,...$ and the Greek indices $\alpha,\beta,...$ run over the range $1,...,n$ and $1,...,(n-1)$, respectively and the matrix
\begin{displaymath}
G=\left(\begin{array}{c|c}
P\,\,\, & \frac{1}{2}M \\
\hline\,\,
\frac{1}{2}M^{t}\,\,\ & Q
\end{array}\right)_{(2n-1)\times(2n-1)},
\end{displaymath}
is positive definite where, $P=(p^{ab})_{n\times n},Q=(q^{\alpha\beta})_{(n-1)\times (n-1)}, M=(m^{a\alpha})_{n\times(n-1)}$.
\begin{lem}\label{mm}
Let $(M,F)$ be a Finsler manifold and $\phi:TM\longrightarrow \mathbb{R}$ a smooth function with zero homogeneity on the tangent bundle $TM$. The semi-linear differential equation
\begin{equation}\label{lili}
\frac{\partial \phi}{\partial t}=g^{ij}\frac{\delta^2\phi}{\delta x^i\delta x^j}+F^2g^{ij}\frac{\partial^2 \phi}{\partial y^i\partial y^j}+\textrm{lower order terms},
\end{equation}
is a strictly parabolic equation on $SM$.
\end{lem}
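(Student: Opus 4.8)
The plan is to verify the single hypothesis of Definition \ref{semipar}: after expressing the two principal (second–order) terms of \eqref{lili} in the $SM$-adapted frame $\{\hat{e}_a,\hat{e}_{n+\alpha}\}$, the resulting $(2n-1)\times(2n-1)$ symbol matrix $G$ must be positive definite. First I would record the two structural facts I intend to use. From the $g$-orthonormality of $\{e_a\}$ one has $g_{ij}=\delta_{ab}v^a_iv^b_j$, hence $g^{ij}=\delta^{ab}u^i_au^j_b$, so that $g^{ij}v^a_iv^b_j=\delta^{ab}$. From the frame relations I would also use $\frac{\delta}{\delta x^i}=v^a_i\hat{e}_a$ and $F\frac{\partial}{\partial y^i}=v^a_i\hat{e}_{n+a}$.

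For the horizontal block, writing $\frac{\delta}{\delta x^i}=v^a_i\hat{e}_a$ and differentiating once more gives
\[
g^{ij}\frac{\delta^2\phi}{\delta x^i\delta x^j}=g^{ij}v^a_iv^b_j\,\hat{e}_a\hat{e}_b\phi+(\text{l.o.t.})=\delta^{ab}\hat{e}_a\hat{e}_b\phi+(\text{l.o.t.}),
\]
since the terms in which $\hat{e}_a$ lands on $v^b_j$ are first order in $\phi$. Thus the horizontal–horizontal block of $G$ is $P=(\delta^{ab})=I_n$. For the vertical block, similarly $F^2\frac{\partial^2\phi}{\partial y^i\partial y^j}$ equals $\bigl(F\frac{\partial}{\partial y^i}\bigr)\bigl(F\frac{\partial}{\partial y^j}\bigr)\phi$ up to first-order terms, so
\[
F^2g^{ij}\frac{\partial^2\phi}{\partial y^i\partial y^j}=g^{ij}v^a_iv^b_j\,\hat{e}_{n+a}\hat{e}_{n+b}\phi+(\text{l.o.t.})=\delta^{ab}\hat{e}_{n+a}\hat{e}_{n+b}\phi+(\text{l.o.t.}).
\]

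Here the summation index $a$ a priori runs to $n$, but $\hat{e}_{n+n}=u^i_nF\frac{\partial}{\partial y^i}=y^i\frac{\partial}{\partial y^i}$ is the Liouville field, and zero-homogeneity of $\phi$ forces $\hat{e}_{n+n}\phi=y^i\partial\phi/\partial y^i=0$ by Euler's relation. Since each $\hat{e}_{n+\beta}\phi$ is again zero-homogeneous, I would check that every summand carrying the index $n$ (namely $\hat{e}_{n+n}\hat{e}_{n+n}\phi$, $\hat{e}_{n+n}\hat{e}_{n+\beta}\phi$ and $\hat{e}_{n+\alpha}\hat{e}_{n+n}\phi$) vanishes, so the vertical–vertical block collapses to $Q=(\delta^{\alpha\beta})=I_{n-1}$ over $\alpha,\beta=1,\dots,n-1$. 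Because the horizontal principal part produces only $\hat{e}_a\hat{e}_b$ derivatives and the vertical one only $\hat{e}_{n+\alpha}\hat{e}_{n+\beta}$ derivatives, no mixed second-order term appears, so the off-diagonal block is $M=0$. Hence $G=\mathrm{diag}(I_n,I_{n-1})=I_{2n-1}$ is positive definite, and \eqref{lili} is strictly parabolic on $SM$ in the sense of Definition \ref{semipar}.

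I expect the one delicate point to be the vertical block: one must confirm that zero-homogeneity eliminates \emph{exactly} the radial direction $\hat{e}_{n+n}$, including the cross terms, so that $Q$ is a genuine full-rank $(n-1)\times(n-1)$ identity rather than a degenerate $n\times n$ block. This is precisely what makes the operator parabolic on the $(2n-1)$-dimensional $SM$, even though the naive vertical symbol on $TM_0$ would be degenerate along the Liouville direction.
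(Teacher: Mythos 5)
Your argument is correct and follows essentially the same route as the paper: rewrite the two principal terms in the adapted frame $\{\hat{e}_a,\hat{e}_{n+\alpha}\}$, absorb the frame-derivative contributions into first-order terms, and observe that the symbol is the block-diagonal matrix $\mathrm{diag}(g^{ab},g^{\alpha\beta})$ (which in the $g$-orthonormal frame is exactly your $\mathrm{diag}(I_n,I_{n-1})$), hence positive definite. Your explicit use of Euler's relation to kill the radial vertical direction is precisely what the paper encodes implicitly through the identity $g^{\alpha\beta}u^i_\alpha u^j_\beta=g^{ij}-y^iy^j$ together with the zero-homogeneity of $\phi$, so the two proofs coincide in substance.
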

\begin{proof}
Let us denote again by $\phi$ the restriction of $\phi$ on $SM$. By definition, we have
\begin{eqnarray*}
&&\hat{e}_{a}\phi=u^i_a\frac{\delta \phi}{\delta x^i},\\
&&\hat{e}_{b}\hat{e}_{a}\phi=u^{i}_{a}u^{j}_{b}\frac{\delta^2 \phi}{\delta x^i\delta x^j}+\hat{e}_b(u^{i}_{a})(\frac{\delta \phi}{\delta x^i}).
\end{eqnarray*}
Multiplying both sides of the above equation by $g^{ab}$ we get
\begin{align*}
g^{ab}\hat{e}_{b}\hat{e}_{a}\phi &=g^{ab}u^{i}_{a}u^{j}_{b}\frac
{\delta^2 \phi}{\delta x^i\delta x^j}+g^{ab}\hat{e}_b(u^{i}_{a})
(\frac{\delta \phi}{\delta x^i})\\
&=g^{ij}\frac{\delta^2 \phi}{\delta x^i\delta x^j}+g^{ab}\hat{e}_b(u^{i}_{a})(\frac{\delta \phi}{\delta x^i}),
\end{align*}
where, $g_{ab}=g_{ij}u^i_au^j_b$. Denoting $B^c:=v^c_ig^{ab}\hat{e}_{b}(u^i_a)$ we can rewrite the expression
 $g^{ij}\frac{\delta^2 \phi}{\delta x^i\delta x^j}$ on $SM$ with respect to $\hat{e}_{a}$'s as follows
\begin{equation}\label{First}
g^{ab}\hat{e}_{b}\hat{e}_{a}\phi-B^c\hat{e}_c\phi=g^{ij}\frac{\delta^2 \phi}{\delta x^i\delta x^j}.
\end{equation}
By definition,  we also have
\begin{align*}
\hat{e}_{n+\alpha}\phi&=Fu^{i}_{\alpha}\frac{\partial \phi}{\partial y^i},\\
\hat{e}_{n+\beta}\hat{e}_{n+\alpha}\phi&=\hat{e}_{n+\beta}(Fu^{i}_{\alpha}\frac{\partial \phi}{\partial y^i})
\nn\\&=F^2u^j_\beta u^i_\alpha\frac{\partial^2 \phi}{\partial y^j\partial y^i}+F(\hat{e}_{n+\beta} u^i_{\alpha})(\frac{\partial \phi}{\partial y^i})+Fu^{j}_{\beta}(\frac{\partial F}{\partial y^j})u^i_\alpha\frac{\partial \phi}{\partial y^i}.
\end{align*}
Using the fact $u^{j}_{\beta}\frac{\partial F}{\partial y^j}=0$, see \cite[p.\ 161]{HAZ}, we have
\begin{eqnarray*}
\hat{e}_{n+\beta}\hat{e}_{n+\alpha}\phi=F^2u^j_\beta u^i_\alpha\frac{\partial^2
\phi}{\partial y^j\partial y^i}+F(\hat{e}_{n+\beta} u^i_{\alpha})(\frac{\partial \phi}{\partial y^i}).
\end{eqnarray*}
Multiplying both sides by $g^{\alpha\beta}$ and taking into account $g^{\alpha\beta}u^{i}_{\alpha}u^{j}_{\beta}=g^{ij}-y^iy^j$ we get
\begin{eqnarray*}
g^{\alpha\beta}\hat{e}_{n+\beta}\hat{e}_{n+\alpha}\phi=F^2g^{ij}\frac{\partial^2 \phi}{\partial y^j\partial y^i}+Fg^{\alpha\beta}(\hat{e}_{n+\beta}u^i_{\alpha})\frac{\partial\phi}{\partial y^i},
\end{eqnarray*}
where, $g_{\alpha\beta}=g_{ij}u^i_\alpha u^j_\beta$. If we denote $D^\gamma:=v^\gamma_iFg^{\alpha\beta}\hat{e}_{n+\beta}u^i_{\alpha}$  we can rewrite the expression  $F^2g^{ij}\frac{\partial^2 \phi}{\partial y^j\partial y^i}$ on $SM$ with respect to $\hat{e}_{n+\alpha}$'s as follows
\begin{equation}\label{second}
g^{\alpha\beta}\hat{e}_{n+\beta}\hat{e}_{n+\alpha}\phi-D^\gamma\hat{e}_{n+\gamma}\phi=F^2g^{ij}\frac{\partial^2 \phi}{\partial y^j\partial y^i}.
\end{equation}
Thus the principal order terms $g^{ij}\frac{\delta^{2}\phi}{\delta x^{i}\delta x^{j}}$ and $F^2g^{ij}\frac{\partial^{2}\phi}{\partial y^{i}\partial y^{j}}$ convert to $g^{ab}\hat{e}_{b}\hat{e}_{a}\phi-B^c\hat{e}_c\phi$
and $g^{\alpha\beta}\hat{e}_{n+\beta}\hat{e}_{n+\alpha}\phi-D^\gamma\hat{e}_{n+\gamma}\phi$ on $SM$. On the other hand, the order of the lower order terms in (\ref{lili}) do not change after rewriting them in terms of the basis $\{\hat{e}_a,\hat{e}_{n+\alpha}\}$ on $SM$. Therefore (\ref{lili}) on $SM$ is written as
\begin{equation}\label{bibi}
\frac{\partial \phi}{\partial t}=g^{ab}\hat{e}_{b}\hat{e}_{a}\phi+g^{\alpha\beta}\hat{e}_{n+\beta}\hat{e}_{n+\alpha}\phi-B^c\hat{e}_c\phi-D^\gamma\hat{e}_{n+\gamma}\phi+\textrm{lower order terms}.
\end{equation}
Using the fact that $g$ is positive definite, the coefficients
\begin{displaymath}
G=\left(\begin{array}{c|c}
g^{ab} & 0 \\
\hline
0 & g^{\alpha\beta}
\end{array}\right)_{(2n-1)\times(2n-1)},
\end{displaymath}
of principal order terms of (\ref{bibi}) is positive definite on $SM$. Therefore, by virtue of (\ref{po}) the  differential equation (\ref{bibi}) is a semi-linear strictly parabolic equation on $SM$.
\end{proof}
\section{Setting up an operator and a vector field on $SM$}
Let $(M,F)$ and $(N,\bar{F})$ be the two $n$-dimensional Finsler manifolds with the corresponding metric tensors $g$ and $h$, respectively. We decorate with a bar the geometric objects on $N$. Let $(x^i,y^i)$ and $(\bar{x}^i,\bar{y}^i)$ be the local coordinate systems on $TM$ and $TN$, respectively. Considering that $c$ is a geodesic on $(M,F)$, the natural lift of $c$ on $TM$ is given by
\begin{equation*}
\tilde{c}:t\in I\longrightarrow\tilde{c}(t)=(x^{i}(t),\frac{dx^i}{dt}(t))\in TM.
\end{equation*}
It is well known that $\tilde{c}$ is a \emph{horizontal curve} on $TM$, that is to say its tangent vector field is a horizontal vector field with the components  $\dot{\tilde{c}}(t)=\frac{dx^i}{dt}\frac{\delta}{\delta x^i}\in HTM$.
Let us consider a diffeomorphism $\varphi$ between the tangent bundles $TM$ and $TN$, such that $\bar{c}(t):=(\varphi\circ\tilde{c})(t)$ is a horizontal curve on $TN$.
\begin{align}\label{ez}
\varphi &:TM\longrightarrow TN,\\
&(x^i,y^i)\mapsto\varphi(x^i,y^i)=(\varphi^{\alpha}(x^i,y^i))\nonumber,
\end{align}
where, the components $\varphi^{\alpha}$ of $\varphi$ are $(\varphi^{\alpha}(x^i,y^i))=:(\varphi^j(x^i,y^i),\varphi^{n+j}(x^i,y^i))$, and the Latin indices $i,j, ...$ run over the range $1,\cdots,n$ and the Greek indice $\alpha$ run over the range $1,\cdots,2n$. Throughout this section, $\varphi$ takes a horizontal curve on $TM$ into a horizontal curve on $TN$.

Denote by $\Gamma^{i}_{\,jk}$ and $\bar{\Gamma}^{i}_{\,jk}$ the coefficients of the Chern connection on $(M,F)$ and $(N,\bar{F})$, respectively. Clearly
\begin{align}\label{12}
\bar{\nabla}_{\dot{\bar{c}}}\dot{\bar{c}}&
=\bar{\nabla}_{\dot{\bar{c}}}
\frac{d\bar{x}^j}{dt}\frac{\delta}{\delta \bar{x}^j}=\frac{d^{2}\bar{x}^{j}}{dt^{2}}\frac{\delta}{\delta \bar{x}^j}+\frac{d\bar{x}^j}{dt}\bar{\nabla}_{\dot{\bar{c}}}\frac{\delta}{\delta \bar{x}^j}
=(\frac{d^{2}\bar{x}^{i}}{dt^{2}}+\frac{d\bar{x}^j}{dt}\frac{d\bar{x}^k}{dt}\bar{\Gamma}^{i}_{\,jk})\frac{\delta}{\delta \bar{x}^{i}},
\end{align}
where $\dot{\bar{c}}(t):=\frac{d\bar{x}^j}{dt}\frac{\delta}{\delta \bar{x}^j} $ is the tangent vector to the horizontal curve $\bar{c}(t)$ on $TN$.
On the other hand
\begin{align*}
\frac{d\bar{x}^i}{dt}=\frac{\delta\varphi^i}{\delta x^p}\frac{dx^p}{dt}, \quad \frac{d^{2}\bar{x}^i}{dt^2}=\frac{\delta^2\varphi^i}{\delta x^p\delta x^q}\frac{dx^p}{dt}\frac{dx^q}{dt}+\frac{\delta\varphi^i}{\delta x^p}\frac{d^2x^p}{dt^2}.
\end{align*}
Replacing the last equations in (\ref{12}) leads to
\begin{equation}\label{12+1+1}
\bar{\nabla}_{\dot{\bar{c}}}\dot{\bar{c}}=(\frac{\delta^2\varphi^i}{\delta x^p\delta x^q}\frac{dx^p}{dt}\frac{dx^q}{dt}+\frac{\delta\varphi^i}{\delta x^h}\frac{d^2x^h}{dt^2}+\bar{\Gamma}^{i}_{\,jk}\frac{\delta\varphi^j}{\delta x^p}\frac{\delta\varphi^k}{\delta x^q}\frac{dx^p}{dt}\frac{dx^q}{dt})\frac{\delta}{\delta \bar{x}^{i}},
\end{equation}
where, all the indices $i,j,p,q,\cdots,$ run over the range $1,\cdots,n$. The geodesic $c$ on $(M,F)$, satisfies
\begin{equation}\label{12+1+1+1}
\frac{d^2x^h}{dt^2}+\Gamma^{h}_{\,pq}\frac{dx^p}{dt}\frac{dx^q}{dt}=0.
\end{equation}
Substituting $ \frac{d^2x^h}{dt^2}$ from the last equation in (\ref{12+1+1}), leads to
\begin{equation*}
\bar{\nabla}_{\dot{\bar{c}}}\dot{\bar{c}}=\frac{dx^p}{dt}\frac{dx^q}{dt}(\frac{\delta^2\varphi^i}{\delta x^p\delta x^q}-\frac{\delta\varphi^i}{\delta x^h}\Gamma^{h}_{\,pq}+\bar{\Gamma}^{i}_{\,jk}\frac{\delta\varphi^j}{\delta x^p}\frac{\delta\varphi^k}{\delta x^q})\frac{\delta}{\delta \bar{x}^{i}}.
\end{equation*}
Next, let
\begin{eqnarray*}
A^{i}_{pq}:=\frac{\delta^2\varphi^i}{\delta x^p\delta x^q}-\frac{\delta\varphi^i}{\delta x^h}\Gamma^{h}_{\,pq}+\bar{\Gamma}^{i}_{\,jk}\frac{\delta\varphi^j}{\delta x^p}\frac{\delta\varphi^k}{\delta x^q}+F^2\frac{\partial^2\varphi^i}{\partial y^p\partial y^q}.
\end{eqnarray*}
Contracting $A^i_{pq}$ with $g^{pq}$ leads to the following operators
\begin{equation} \label{15}
(\Phi_{g,h}\varphi)^{i}:=g^{pq}(\frac{\delta^2\varphi^i}{\delta x^p\delta x^q}+F^2\frac{\partial^2\varphi^i}{\partial y^p\partial y^q}-\frac{\delta\varphi^i}{\delta x^h}\Gamma^{h}_{\,pq}+\bar{\Gamma}^{i}_{\,jk}\frac{\delta\varphi^j}{\delta x^p}\frac{\delta\varphi^k}{\delta x^q}),
\end{equation}
where, $(\Phi_{g,h}\varphi)^{i}=g^{pq}A^{i}_{pq}$ and $i=1,...,n$. For the indices greater than ``$n$" we consider the following operator.
\begin{equation} \label{15+1+1}
(\Phi_{g,h}\varphi)^{n+i}:=g^{pq}(\frac{\delta^2\varphi^{n+i}}{\delta x^p\delta x^q}+F^2\frac{\partial^2\varphi^{n+i}}{\partial y^p\partial y^q}+\frac{\partial\varphi^{n+i}}{\partial y^k}\frac{\delta N^k_q}{\delta x^p}),
\end{equation}
where, $i=1,\cdots,n$. Summarizing the above definitions we have
\begin{equation*}
(\Phi_{g,h}\varphi)^{\alpha}=\left\{
\begin{array}{l}
(\Phi_{g,h}\varphi)^{i}\qquad \alpha=i\cr
(\Phi_{g,h}\varphi)^{n+i}\qquad \alpha=n+i,
\end{array}
\right.
\end{equation*}
where, $i=1,...,n$. Next, we show the operator $(\Phi_{g,h}\varphi)^{\alpha}$ is invariant under all diffeomorphisms on $TM$.
Our approach to defining the above operator on $SM$ and the following key lemma is an extension of that presented in \cite{BRE}.
\begin{lem} \label{main2}
Let $(M,F)$ and $(N,\bar{F})$ be two $n$-dimensional Finsler manifolds with the corresponding metric tensors $g$ and $h$, respectively. If $\psi$ is a diffeomorphism from $TM$ to itself, then it leaves invariant the operator
$(\Phi_{g,h}\varphi)^\alpha$, in the sense that
\begin{eqnarray*}
(\Phi_{\psi^{*}(g),h}\psi^{*}\varphi)^\alpha\mid_{(\tilde{x},\tilde{y})}=(\Phi_{g,h}\varphi)^\alpha\mid_{(x,y)},
\end{eqnarray*}
where, $\tilde{x}^i=\psi^{*}x^i$, $\tilde{y}^i=\psi^{*}y^i$ and  $\alpha=1,\cdots,2n$.
\end{lem}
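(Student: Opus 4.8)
The plan is to read $(\Phi_{g,h}\varphi)^{\alpha}$ as a Finslerian tension field (map Laplacian) and to prove the identity as its naturality. By construction $(\Phi_{g,h}\varphi)^{i}=g^{pq}A^{i}_{pq}$, where $A^{i}_{pq}$ is the covariant Hessian of the map (horizontal plus vertical) that extends the covariant acceleration $\bar{\nabla}_{\dot{\bar{c}}}\dot{\bar{c}}$ appearing just before (\ref{15}). The asserted equality is then exactly the statement that this operator is covariant under $\psi$, regarded as an isometry for the pulled-back metric $\psi:(TM,\psi^{*}g)\to(TM,g)$: replacing $\varphi$ by $\varphi\circ\psi$ and $g$ by $\psi^{*}g$ should merely re-express the same geometric object in the coordinates $(\tilde{x},\tilde{y})$. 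So the whole proof is a coordinate verification, transported to the Finsler setting, of the classical fact $\tau_{\psi^{*}g,h}(\varphi\circ\psi)=\psi^{*}\big(\tau_{g,h}(\varphi)\big)$ for the tension field $\tau$.

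Concretely, I would first expand $(\Phi_{\psi^{*}g,h}\psi^{*}\varphi)^{\alpha}$ from the definitions (\ref{15}) and (\ref{15+1+1}), with $g\mapsto\psi^{*}g$, $\varphi\mapsto\psi^{*}\varphi=\varphi\circ\psi$, and the domain Chern symbols taken to be those of $\psi^{*}g$. The standing hypothesis of this section, that $\psi$ carries horizontal curves to horizontal curves, is what makes this tractable: it forces $\psi_{*}$ to preserve the splitting $TTM_{0}=HTM\oplus VTM$, so that $\psi_{*}\frac{\delta}{\delta x^{p}}$ stays horizontal and the chain rule reads $\frac{\delta(\varphi\circ\psi)^{i}}{\delta x^{p}}=\frac{\delta\varphi^{i}}{\delta\tilde{x}^{a}}\,\frac{\delta\tilde{x}^{a}}{\delta x^{p}}$, with no vertical cross-terms; likewise the vertical derivatives and the factor $F^{2}$ transform cleanly ($F^{2}$ being $\psi$-invariant since $\psi$ is an isometry for $\psi^{*}g$). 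Writing $J^{a}_{p}=\frac{\delta\tilde{x}^{a}}{\delta x^{p}}$, both the horizontal and the vertical first derivatives then factor through the same Jacobian $J$.

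The key algebraic step is the naturality of the Chern connection: the Chern symbols $\tilde{\Gamma}^{h}_{pq}$ of $\psi^{*}g$ are the $\psi$-pullback of those of $g$, hence obey the usual inhomogeneous transformation law carrying a second-derivative term in $\psi$. Substituting this, the non-tensorial piece produced by the second horizontal derivative $\frac{\delta^{2}(\varphi\circ\psi)^{i}}{\delta x^{p}\delta x^{q}}$ is cancelled exactly by the piece coming from $-\frac{\delta(\varphi\circ\psi)^{i}}{\delta x^{h}}\tilde{\Gamma}^{h}_{pq}$ — the same cancellation that turns a covariant Hessian into a tensor — while the $\bar{\Gamma}$-term is untouched because the target data $(N,h)$ are fixed. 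Thus $A^{i}_{pq}$ transforms as a $(0,2)$-tensor in $p,q$ (its upper index, a target index, is inert), and contracting against $(\psi^{*}g)^{pq}=(J^{-1})^{p}_{a}(J^{-1})^{q}_{b}\,g^{ab}$ cancels the Jacobians and yields the claimed identity $(\Phi_{\psi^{*}g,h}\psi^{*}\varphi)^{i}\mid_{(\tilde{x},\tilde{y})}=(\Phi_{g,h}\varphi)^{i}\mid_{(x,y)}$. For $\alpha=n+i$ I would run the parallel computation on (\ref{15+1+1}), where the role of the connection cancellation is now played by the term $\frac{\partial\varphi^{n+i}}{\partial y^{k}}\frac{\delta N^{k}_{q}}{\delta x^{p}}$ through the transformation law of the nonlinear connection $N^{k}_{q}$.

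I expect the main obstacle to be purely the Finslerian bookkeeping rather than the conceptual skeleton. Unlike the Riemannian case, the operator $\delta/\delta x$ hides the nonlinear connection $N$, which itself transforms inhomogeneously, and the principal part is a sum of a horizontal Hessian, a vertical Hessian $F^{2}\partial^{2}\varphi^{i}/\partial y^{p}\partial y^{q}$, and, in the $n+i$ block, an explicit $N$-term. The delicate point is to check that these pieces transform compatibly and that all inhomogeneous remainders cancel \emph{simultaneously} in both blocks; in particular, verifying the cancellation in the $n+i$ block, which hinges on the precise transformation of $\frac{\delta N^{k}_{q}}{\delta x^{p}}$, is the step least guaranteed by the abstract tension-field picture and where I would concentrate the computation.
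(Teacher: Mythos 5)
Your proposal is correct in substance, but it proves the lemma by a genuinely different route than the paper. The paper's proof is a termwise pullback: it writes out $(\Phi_{g,h}\varphi)^i$ at the point $(x,y)=\psi(\tilde x,\tilde y)$ and then simply identifies each factor --- $g^{pq}\circ\psi$ with $(\psi^*g)^{pq}$, $\frac{\delta^2\varphi^i}{\delta x^p\delta x^q}\circ\psi$ with $\frac{\delta^2(\psi^*\varphi)^i}{\delta\tilde x^p\delta\tilde x^q}$, $\Gamma^k_{pq}\circ\psi$ with $\Gamma(\psi^*g)^k_{pq}$ --- i.e.\ it works in the adapted coordinates $\tilde x^i=\psi^*x^i$, $\tilde y^i=\psi^*y^i$, where the Jacobian of $\psi$ is absorbed into the relabeling and the identity becomes form-invariance of the defining coordinate formula; no cancellation between terms is ever invoked. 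You instead fix one coordinate system, carry the Jacobian $J^a_p$ explicitly, use the inhomogeneous transformation law of the Chern symbols, and cancel the second-derivative remainder against the $\Gamma$-term before contracting with $(\psi^*g)^{pq}$ --- the classical naturality-of-the-tension-field computation. Your version is longer but makes visible what the paper leaves implicit: that $A^i_{pq}$ is genuinely tensorial in $p,q$, and that the hypothesis that $\psi$ respects the horizontal/vertical splitting is what prevents cross-terms in $\frac{\delta}{\delta x^p}(\varphi\circ\psi)$; the paper's version is essentially tautological and correspondingly silent on both points. Your flagged worry about the $\alpha=n+i$ block and the transformation of $\frac{\delta N^k_q}{\delta x^p}$ is legitimate, but note that the paper does not resolve it either --- it disposes of that block with ``similarly, one can show'' --- so this is a shared, not a new, weakness.
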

\begin{proof}
Let $(x^i,y^i)$ and $(\bar{x}^i,\bar{y}^i)$ be the two local coordinate systems on $TM$ and $TN$, respectively. Letting $\tilde{x}^i=\psi^{*}x^i$, $\tilde{y}^i=\psi^{*}y^i$, and considering the diffeomorphism $\varphi$ given in \eqref{ez} for $\alpha=i$, we have
\begin{align*}
(\Phi_{g,h}\varphi)^i\mid_{(x,y)}&=g^{pq}(x,y)\Big (\frac{\delta^{2}\varphi^{i}}{\delta x^{p} \delta x^{q}}(x,y)+F^2(x,y)\frac{\partial^2\varphi^i}{\partial y^p\partial y^q}(x,y)
-\frac{\delta \varphi^{i}}{\delta x^{k}}(x,y)\Gamma^{k}_{pq}(x,y)\\
&+\bar{\Gamma}^{i}_{jk}(\bar{x},\bar{y})\frac{\delta \varphi^{j}}{\delta x^{p}}(x,y)\frac{\delta \varphi^{k}}{\delta x^{q}}(x,y)\Big).
\end{align*}
Using the diffeomorphism $\psi$ and the pull-back $\psi^*$ we have
\begin{align*}
(\Phi_{g,h}\varphi)^i\mid_{(x,y)}&=g^{pq}(\psi(\tilde{x},\tilde{y}))\Big(\frac{\delta^{2}\varphi^{i}}{\delta x^{p} \delta x^{q}}(\psi(\tilde{x},\tilde{y}))+F^2(\psi(\tilde{x},\tilde{y}))\frac{\partial^2\varphi^i}{\partial y^p\partial y^q}(\psi(\tilde{x},\tilde{y})) \\
&\qquad-\frac{\delta \varphi^{i}}{\delta x^{k}}(\psi(\tilde{x},\tilde{y}))\Gamma^{k}_{pq}(\psi(\tilde{x},\tilde{y}))+\bar{\Gamma}^{i}_{jk}(\bar{x},\bar{y})\frac{\delta \varphi^{j}}{\delta x^{p}}(\psi(\tilde{x},\tilde{y}))\frac{\delta \varphi^{k}}{\delta x^{q}}(\psi(\tilde{x},\tilde{y}))\Big)\\
&=(\psi^{*}
g)^{pq}(\tilde{x},\tilde{y})\Big(\frac{\delta^{2}(\psi^{*}\varphi)^{i}}{\delta \tilde{x}^{p} \delta \tilde{x}^{q}}(\tilde{x},\tilde{y})+(\psi^{*}F^2)(\tilde{x},\tilde{y})\frac{\partial^{2}(\psi^{*}\varphi)^{i}}{\partial \tilde{y}^{p} \partial \tilde{y}^{q}}(\tilde{x},\tilde{y})\\
&\qquad-\frac{\delta (\psi^{*}\varphi )^{i}}{\delta \tilde{x}^{k}}(\tilde{x},\tilde{y})\Gamma(\psi^{*}g)^{k}_{pq}(\tilde{x},\tilde{y})
+\bar{\Gamma}^{i}_{jk}(\bar{x},\bar{y})\frac{\delta (\psi^{*}\varphi)^{j}}{\delta \tilde{x}^{p}}(\tilde{x},\tilde{y})\frac{\delta (\psi^{*}\varphi)^{k}}{\delta \tilde{x}^{q}}(\tilde{x},\tilde{y})\Big) \\
&=(\Phi_{\psi^{*}(g),h}\psi^{*}\varphi)^i\mid_{(\tilde{x},\tilde{y})}.\nonumber
\end{align*}
Similarly, for $\alpha=n+i$, one can show that
\begin{equation*}
(\Phi_{g,h}\varphi)^{n+i}\mid_{(x,y)}=(\Phi_{\psi^{*}(g),h}\psi^{*}\varphi)^{n+i}\mid_{(\tilde{x},\tilde{y})},
\end{equation*}
where, $i=1,\cdots,n.$ This completes the proof.
\end{proof}
\begin{rem} \label{main3+1}
Let $(M,F)$ and $(N,\bar{F})$ be two $n$-dimensional Finsler manifolds with the corresponding metric tensors $g$ and $h$, respectively. Let $\varphi:TM\longrightarrow TN$, $\varphi(x^i,y^i)=(\varphi^\alpha(x^i,y^i))$, $\alpha=1,..,2n$, be a diffeomorphism that takes a horizontal curve to a horizontal curve. Given $\varphi_{0}:TM \longrightarrow TN$, we consider the following evolution equation
\begin{equation} \label{1666}
\frac{\partial}{\partial t}\varphi^\alpha=(\Phi_{g,h}\varphi)^\alpha,\hspace{0.6cm} \varphi_{(0)}=\varphi_{0}.
\end{equation}
By restricting $\varphi^\alpha$'s to $SM$ and using Lemma \ref{mm}, one can see that (\ref{1666}) is a strictly parabolic system. Hence, there is a unique solution for (\ref{1666}) in a short-time.
\end{rem}
\begin{cor} \label{main3}
Let $(M,\tilde{F})$ and $(N,\bar{F})$ be two $n$-dimensional Finsler manifolds with corresponding metric tensors $\tilde{g}$ and $h$, respectively. Let $N=M$ and $\varphi$ be the identity map $\varphi=Id:TM\longrightarrow TM$, $\varphi(x^i,y^i)=(x^i,y^i)$, then we have
\begin{equation*} 
(\Phi_{\tilde{g},h}Id)^{\alpha}=(\Phi_{\tilde{g},h}Id)^{i}=\tilde{g}^{pq}(-\tilde{\Gamma}^{i}_{pq}+\bar{\Gamma}^{i}_{pq}),\quad \alpha=i,
\end{equation*}
where, $i=1,\cdots,n$ and $\tilde{\Gamma}^{i}_{pq}, \bar{\Gamma}^{i}_{pq}$ are the coefficients of horizontal covariant derivatives of the Chern connection with respect to the metrics $\tilde{g}$ and $h$, respectively.
\end{cor}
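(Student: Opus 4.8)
The plan is to substitute the identity map directly into the defining formulas (\ref{15}) and (\ref{15+1+1}) and carry out the resulting $\delta$- and $\partial$-derivatives. For $\varphi = Id$ the components are simply $\varphi^i(x,y) = x^i$ and $\varphi^{n+i}(x,y) = y^i$, so the whole computation reduces to differentiating the coordinate functions along the horizontal and vertical frames. I would treat the cases $\alpha = i$ and $\alpha = n+i$ separately, since they rely on different defining equations.

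For the case $\alpha = i$, I would first record the elementary derivatives arising from $\frac{\delta}{\delta x^p} = \frac{\partial}{\partial x^p} - N^j_p \frac{\partial}{\partial y^j}$. Since $x^i$ is independent of $y$, one has $\frac{\delta x^i}{\delta x^p} = \delta^i_p$, which is constant, so the second-order horizontal term $\frac{\delta^2 \varphi^i}{\delta x^p \delta x^q}$ vanishes; likewise $F^2 \frac{\partial^2 \varphi^i}{\partial y^p \partial y^q} = 0$. The two remaining terms collapse under the Kronecker deltas, giving $-\frac{\delta\varphi^i}{\delta x^h}\tilde{\Gamma}^h_{pq} = -\tilde{\Gamma}^i_{pq}$ and $\bar{\Gamma}^i_{jk}\frac{\delta\varphi^j}{\delta x^p}\frac{\delta\varphi^k}{\delta x^q} = \bar{\Gamma}^i_{pq}$. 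Contracting with $\tilde{g}^{pq}$ then yields exactly $\tilde{g}^{pq}(-\tilde{\Gamma}^i_{pq} + \bar{\Gamma}^i_{pq})$, which is the asserted formula.

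For the case $\alpha = n+i$, the crucial point is that $\varphi^{n+i} = y^i$ is no longer horizontally constant. Using the same expression for $\frac{\delta}{\delta x^p}$ one computes $\frac{\delta y^i}{\delta x^p} = -N^i_p$, and hence $\frac{\delta^2\varphi^{n+i}}{\delta x^p\delta x^q} = -\frac{\delta N^i_q}{\delta x^p}$; the purely vertical term again vanishes; and the last term in (\ref{15+1+1}) produces $\frac{\partial \varphi^{n+i}}{\partial y^k}\frac{\delta N^k_q}{\delta x^p} = \frac{\delta N^i_q}{\delta x^p}$. These two surviving contributions cancel, so $(\Phi_{\tilde{g},h}Id)^{n+i} = 0$, which is precisely why the statement records $(\Phi_{\tilde{g},h}Id)^\alpha = (\Phi_{\tilde{g},h}Id)^i$.

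The computation is mechanical once the identity $\frac{\delta y^i}{\delta x^p} = -N^i_p$ is in hand, so no serious obstacle arises. The only point needing care --- and the very reason the operator (\ref{15+1+1}) was defined with the extra term $\frac{\partial\varphi^{n+i}}{\partial y^k}\frac{\delta N^k_q}{\delta x^p}$ --- is the cancellation in the vertical block: that term is exactly what kills the $-\frac{\delta N^i_q}{\delta x^p}$ coming from the second horizontal derivative, ensuring that the operator evaluated on the identity has no component along the vertical directions.
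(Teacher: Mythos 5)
Your computation is correct and is exactly the direct substitution of $\varphi=Id$ into (\ref{15}) and (\ref{15+1+1}) that the paper leaves implicit (the corollary is stated without proof as an immediate consequence of those definitions). Your extra verification that $(\Phi_{\tilde{g},h}Id)^{n+i}=0$ via the cancellation of $-\frac{\delta N^i_q}{\delta x^p}$ against $\frac{\partial\varphi^{n+i}}{\partial y^k}\frac{\delta N^k_q}{\delta x^p}$ correctly explains why the statement reduces $(\Phi_{\tilde{g},h}Id)^{\alpha}$ to its first $n$ components and why $\xi$ in \eqref{def;vect} carries only $\frac{\partial}{\partial x^i}$ terms.
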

Let $\xi$ be a vector field on $TM$ with the components
\begin{align}\label{def;vect}
\xi:=(\Phi_{\tilde{g},h}Id)^{i}\frac{\partial}{\partial x^i}=\tilde{g}^{pq}(-\tilde{\Gamma}^{i}_{pq}+\bar{\Gamma}^{i}_{pq})\frac{\partial}{\partial x^i}.
\end{align}
Using the fact that the difference between any two connections is a tensor field, $\xi$ is a globally well-defined vector field. It is easy to verify that the components of $\xi$ are homogeneous of degree zero on $y$. Thus $\xi$ can be considered as a vector field on $SM$.
\section{Ricci-DeTurck flow on Finsler manifolds}
Here we study the Ricci-DeTurck flow and its existence of solution. More intuitively, there are several well-known definitions of the Ricci tensor in Finsler geometry. For instance, H. Akbar-Zadeh has considered two Ricci tensors on Finsler manifolds in his works, namely, one is defined by $ Ric_{ij}:=[\frac{1}{2}F^{2}\mathcal{R}ic]_{y^{i}y^{j}}$ where, $\mathcal{R}ic$ is the \emph{Ricci scalar} defined by $\mathcal{R}ic:=g^{ik}R_{ik}=R^{i}_{\,\,i}$ and $R^{i}_{\,\,k}$ are defined by (\ref{18}), see \cite[p.\ 192]{BCS}. Another Ricci tensor is defined by
$Rc_{ij}:=\frac{1}{2} (\textsf{R}_{ij}+\textsf{R}_{ji})$, where $ \textsf{R}_{ij}$ is the trace of $hh$-curvature defined by $ \textsf{R}_{ij}=R^{\,\,l}_{i\,\,lj}$. The difference between these two Ricci tensors is the additional term $\frac{1}{2}y^k\frac{\partial \textsf{R}_{jk}}{\partial y^i}$ that appeared in the first definition. More precisely, we have $Ric_{ij}-Rc_{ij}=\frac{1}{2}y^k\frac{\partial \textsf{R}_{jk}}{\partial y^i}$.

In \cite{Bao},
 D. Bao, based on the first definition of the Ricci tensor, considered the following evolutionary differential equation as a \emph{Ricci flow} equation in Finsler geometry:
\begin{equation}\label{20002}
\frac{\partial}{\partial t}g_{jk}(t)=-2Ric_{jk},\hspace{0.6cm}g_{(t=0)}=g_{0},
\end{equation}
where, $g_{jk}(t)$ are a  family of Finslerian metrics defined on $\pi^{*}TM\times[0,T)$.
Contracting (\ref{20002}) with $y^{j}y^{k}$, via Euler's theorem, leads to $\frac{\partial}{\partial t}F^{2}=-2F^{2}\mathcal{R}ic$. That is,
\begin{equation} \label{20}
\frac{\partial}{\partial t}\log F(t)=-\mathcal{R}ic,\hspace{0.6cm}F(t=0):=F_{0},
\end{equation}
where, $F_{0}$ is the initial Finsler structure, see more details on \cite{Bao}. Here and everywhere in the present work we consider the first Akbar-Zadeh's definition of Ricci tensor and the related Ricci flow (\ref{20}). One of the advantages of the Ricci quantity $Ric_{ij}$, used in the present work is its independence with respect to the choice of Cartan, Berwald or Chern connections.
\begin{defn}\label{Main4}
Let $M$ be a compact manifold with a fixed background Finsler structure $\bar{F}$ and the related Finsler metric $h$. Assume that for all $t\in [0,T)$, $\tilde{F}(t)$ is a one-parameter family of Finsler structures defined on $TM$ and $\tilde{g}(t)$ is the tensor metric related to $\tilde{F}(t)$. We say that $\tilde{F}(t)$ is a solution to the Finslerian Ricci-DeTurck flow if
\begin{equation} \label{22}
\frac{\partial}{\partial t}\tilde{F}^{2}(t)=-2\tilde{F}^{2}(t)\mathcal{R}ic(\tilde{g}(t))-\mathcal{L}_{\xi}\tilde{F}^{2}(t),
\end{equation}
where, $\mathcal{L}_{\xi}$ is the Lie derivative with respect to the vector field $\xi=(\Phi_{\tilde{g}(t),h}Id)^{i}\frac{\partial}{\partial x^i}$ on $SM$ as mentioned previously in \eqref{def;vect}.
\end{defn}
The following theorem shows that the Ricci-DeTurck flow (\ref{22}) is well defined and has a solution in a short-time interval.\\
{\bf Proof of Theorem \ref{main8}.}
Let $M$ be a compact manifold with a fixed background Finsler structure $\bar{F}$ and the related Finsler metric $h$. Here, all the indices run over the range $1,...,n$. The Ricci-DeTurck flow (\ref{22}) can be written in the following form
\begin{equation}\label{28}
y^{p}y^{q}\frac{\partial}{\partial t}\tilde{g}_{pq}(t)=-2\tilde{F}^{2}(t)\mathcal{R}ic(\tilde{g}(t))-\mathcal{L}_{\xi}y^{p}y^{q}\tilde{g}_{pq}(t),
\end{equation}
where, $\tilde{g}(t)$ is the metric tensor related to $\tilde{F}(t)$. Also we have
\begin{equation*}
\mathcal{L}_{\xi}(y^{p}y^{q}\tilde{g}_{pq})=y^{p}y^{q}\mathcal{L}
_{\xi}\tilde{g}_{pq}+2y^{p}\tilde{g}_{pq}\mathcal{L}_{\xi}y^{q}.
\end{equation*}
Therefore, (\ref{28}) becomes
\begin{equation}\label{2800}
y^{p}y^{q}\frac{\partial}{\partial t}\tilde{g}_{pq}(t)=-2\tilde{F}^{2}(t)\mathcal{R}ic(\tilde{g}(t))-y^{p}y^{q}\mathcal{L}
_{\xi}\tilde{g}_{pq}-2y^{p}\tilde{g}_{pq}\mathcal{L}_{\xi}y^{q}.
\end{equation}
By means of the Lie derivative formula (\ref{FINAL}) along $\xi$ we have
\begin{equation}\label{30}
y^py^q\mathcal{L}_{\xi}\tilde{g}_{pq}=2y^py^q\nabla_{p}\xi_{q},
\end{equation}
where, $\nabla_{p}$ is the horizontal covariant derivative in Chern connection. The $h$-metric compatibility of the Chern connection, yields
\begin{eqnarray*}
\nabla_{p}\xi_{q}=(\nabla_{p}\tilde{g}_{ql}\xi^{l})=
\tilde{g}_{ql}(\nabla_{p}\xi^{l}).
\end{eqnarray*}
As mentioned earlier, if we denote the coefficients of horizontal covariant derivatives of Chern connection with respect to the metric tensors $h$ and $\tilde{g}$ by $\Gamma(h)$ and $\Gamma(\tilde{g})$ respectively, then by the definition of $\xi$  given in \eqref{def;vect}, the above equation becomes
\begin{align*}
\nabla_{p}\xi_{q}&=\tilde{g}_{ql}(\delta_{p}\xi^{l}+\Gamma(\tilde{g})^{l}_{pw}\xi^{w})\\
&=\tilde{g}_{ql}[\delta_{p}(\tilde{g}^{mn}(\Gamma(h)^{l}_{mn}-\Gamma(\tilde{g})^{l}_{mn}))]+\tilde{g}_{ql}\Gamma(\tilde{g})^{l}_{pw}\xi^{w}\\
&=\tilde{g}_{ql}[(\delta_{p}\tilde{g}^{mn})(\Gamma(h)^{l}_{mn}-\Gamma(\tilde{g})^{l}_{mn})+\tilde{g}^{mn}\delta_{p}(\Gamma(h)^{l}_{mn})-\tilde{g}^{mn}\delta_{p}(\Gamma(\tilde{g})^{l}_{mn})]\nonumber\\
&\quad +\tilde{g}_{ql}\Gamma(\tilde{g})^{l}_{pw}\xi^{w}.
\end{align*}
According to the horizontal derivative of $\Gamma(\tilde{g})^{l}_{mn}$ in the last equation, we get
\begin{align*}
	\nabla_{p}\xi_{q}
=&\frac{1}{2}\tilde{g}^{mn}(\delta_{p}\delta_{q}\tilde{g}_{mn}
-\delta_{p}\delta_{n}\tilde{g}_{qm}-\delta_{p}\delta_{m}\tilde{g}_{qn})-\frac{1}{2}\tilde{g}_{ql}\tilde{g}^{mn}(\delta_{p}\tilde{g}^{ls})
(\delta_{n}\tilde{g}_{sm}-\delta_{s}\tilde{g}_{mn}+\delta_{m}\tilde{g}_{ns})\\
& +\tilde{g}_{ql}(\delta_{p}\tilde{g}^{mn})(\Gamma(h)^{l}_{mn}-\Gamma(\tilde{g})^{l}_{mn})
 +\tilde{g}_{ql}\tilde{g}^{mn}\delta_{p}(\Gamma(h)^{l}_{mn})+\tilde{g}_{ql}\Gamma(\tilde{g})^{l}_{pw}\xi^{w}.\\
\end{align*}

Using the last equation, (\ref{30}) is written
\begin{align} \label{31}
y^{p}y^{q}\mathcal{L}_{\xi}\tilde{g}_{pq}=&y^py^q\tilde{g}^{mn}(\delta_{p}\delta_{q}\tilde{g}_{mn}
-\delta_{p}\delta_{n}\tilde{g}_{qm}-\delta_{p}\delta_{m}\tilde{g}_{qn})\nonumber\\
&-y^py^q\tilde{g}_{ql}\tilde{g}^{mn}(\delta_{p}\tilde{g}^{ls})
(\delta_{n}\tilde{g}_{sm}-\delta_{s}\tilde{g}_{mn}+\delta_{m}\tilde{g}_{ns})\nonumber\\
&+2y^py^q\tilde{g}_{ql}(\delta_{p}\tilde{g}^{mn})(\Gamma(h)^{l}_{mn}-\Gamma(\tilde{g})^{l}_{mn})\nonumber\\
&+2y^py^q\tilde{g}_{ql}\tilde{g}^{mn}\delta_{p}(\Gamma(h)^{l}_{mn})+2y^py^q\tilde{g}_{ql}\Gamma
(\tilde{g})^{l}_{pw}\xi^{w}.
\end{align}
Also we have
\begin{equation} \label{32}
-2\tilde{F}^{2}\mathcal{R}ic(\tilde{g})=-2\tilde{F}^{2}R^{n}_{\,\,n}
=-2\tilde{F}^{2}l^{q}R^{\,\,n}_{q\,\,np}l^{p},
\end{equation}
where, $R^{\,\,n}_{q\,\,np}$ are the components of the $hh$-curvature tensor of Chern connection and $l^{q}=\frac{y^{q}}{\tilde{F}}$ are the components of the Liouville vector field. Replacing (\ref{77}) in (\ref{32}) and using the definition of $\Gamma(\tilde{g})$, yields
\begin{align*}
-2\tilde{F}^{2}\mathcal{R}ic(\tilde{g})&=-2\tilde{F}^{2}l^{q}R^{\,\,n}_{q\,\,np}l
^{p}\nonumber\\
&=-2y^{p}y^{q}(\delta_{n}\Gamma^{n}_{qp}(\tilde{g})-\delta_{p}\Gamma^{n}_{qn}(\tilde{g})
+\Gamma^{n}_{mn}(\tilde{g})\Gamma^{m}_{qp}(\tilde{g})-\Gamma^{n}_{mp}(\tilde{g})\Gamma^{m}
_{qn}(\tilde{g}))\nonumber \\
&=-2y^{p}y^{q}[\delta_{n}(\frac{1}{2}\tilde{g}^{mn}(\delta_{q}\tilde{g}_{mp}+\delta_{p}\tilde{g}_{mq}-\delta_{m}\tilde{g}_{pq}))]\nonumber\\
&\quad +2y^{p}y^{q}[\delta_{p}(\frac{1}{2}\tilde{g}^{mn}(\delta_{q}\tilde{g}_{mn}+\delta_{n}\tilde{g}_{qm}-\delta_{m}\tilde{g}_{qn}))] \nonumber\\
&\quad -2y^{p}y^{q}(\Gamma^{n}_{mn}(\tilde{g})\Gamma^{m}_{qp}(\tilde{g})-\Gamma^{n}_{mp}(\tilde{g})\Gamma^
{m}_{qn}(\tilde{g})).
\end{align*}
By applying the $\delta_{p}$ derivative we have
\begin{align} \label{34}
-2\tilde{F}^{2}\mathcal{R}ic(\tilde{g})=&y^{p}y^{q}\tilde{g}^{mn}(\delta_{p}\delta_{q}\tilde{g}_{mn}+\delta_{n}\delta_{m}
\tilde{g}_{pq}-\delta_{p}\delta_{m}\tilde{g}_{qn}-\delta_{n}\delta_{q}
\tilde{g}_{mp})\nonumber\\
&-y^{p}y^{q}(\delta_{n}\tilde{g}^{nm})(\delta_{q}\tilde{g}_{mp}+\delta_{p}\tilde{g}_{qm}
-\delta_{m}\tilde{g}_{pq})\nonumber\\
&+y^{p}y^{q}(\delta_{p}\tilde{g}^{mn})(\delta_{q}\tilde{g}_{mn}+\delta_{n}\tilde{g}_{qm}
-\delta_{m}\tilde{g}_{qn})\nonumber\\
&-2y^{p}y^{q}(\Gamma^{n}_{mn}(\tilde{g})\Gamma^{m}_{qp}(\tilde{g})-\Gamma^{n}_{mp}(\tilde{g})
\Gamma^{m}_{qn}(\tilde{g})).
\end{align}
Substituting (\ref{31}) and (\ref{34}) in (\ref{2800}), we obtain
\begin{align} \label{36}
y^{p}y^{q}\frac{\partial}{\partial t}\tilde{g}_{pq}(t)=&y^py^q\tilde{g}^{mn}\delta_{n}\delta_{m}\tilde{g}_{pq}\\
&-y^{p}y^{q}(\delta_{n}\tilde{g}^{nm})(\delta_{q}\tilde{g}_{mp}+\delta_{p}\tilde{g}_{qm}
-\delta_{m}\tilde{g}_{pq})\nonumber\\
&+y^{p}y^{q}(\delta_{p}\tilde{g}^{mn})(\delta_{q}\tilde{g}_{mn}+\delta_{n}\tilde{g}_{qm}
-\delta_{m}\tilde{g}_{qn})\nonumber\\
&-2y^{p}y^{q}(\Gamma^{n}_{mn}(\tilde{g})\Gamma^{m}_{qp}(\tilde{g})-\Gamma^{n}_{mp}(\tilde{g})
\Gamma^{m}_{qn}(\tilde{g}))\nonumber\\
&+y^py^q\tilde{g}_{ql}\tilde{g}^{mn}(\delta_{p}\tilde{g}^{ls})
(\delta_{n}\tilde{g}_{sm}-\delta_{s}\tilde{g}_{mn}+\delta_{m}\tilde{g}_{ns})\nonumber\\
&-2y^py^q\tilde{g}_{ql}(\delta_{p}\tilde{g}^{mn})(\Gamma(h)^{l}_{mn}-\Gamma(\tilde{g})^{l}_{mn})\nonumber\\
&-2y^py^q\tilde{g}_{ql}\tilde{g}^{mn}\delta_{p}(\Gamma(h)^{l}_{mn})-2y^py^q\tilde{g}_{ql}\Gamma
(\tilde{g})^{l}_{pw}\xi^{w}\nonumber\\
&-2y^{p}\tilde{g}_{pq}\mathcal{L}_{\xi}y^{q}.\nonumber
\end{align}
Using Euler's theorem yields
\begin{equation} \label{RE3}
y^{p}y^{q}\frac{\partial^{2}\tilde{g}_{pq}}{\partial y^{n}\partial y^{m}}=\frac{\partial ^{2}}{\partial y^{n}\partial y^{m}}(y^{p}y^{q}\tilde{g}_{pq})-2\tilde{g}_{nm}=0.
\end{equation}
In order to get a strictly parabolic system, by virtue of (\ref{RE3}) we add the zero term $\tilde{F}^2 y^{p}y^{q}\tilde{g}^{mn}\frac{\partial^{2}\tilde{g}_{pq}}{\partial y^{n}\partial y^{m}}=0$  to the right hand side of (\ref{36}). Therefore, we have
\begin{align} \label{36+11}
y^{p}y^{q}\Big(&\frac{\partial}{\partial t}\tilde{g}_{pq}(t)-\tilde{g}^{mn}\delta_{n}\delta_{m}\tilde{g}_{pq}-\tilde{F}^2 \tilde{g}^{mn}\frac{\partial^{2}\tilde{g}_{pq}}{\partial y^{n}\partial y^{m}}\\
&+(\delta_{n}\tilde{g}^{nm})(\delta_{q}\tilde{g}_{mp}+\delta_{p}\tilde{g}_{qm}
-\delta_{m}\tilde{g}_{pq})\nonumber\\
&-(\delta_{p}\tilde{g}^{mn})(\delta_{q}\tilde{g}_{mn}+\delta_{n}\tilde{g}_{qm}
-\delta_{m}\tilde{g}_{qn})\nonumber\\
&+2(\Gamma^{n}_{mn}(\tilde{g})\Gamma^{m}_{qp}(\tilde{g})-\Gamma^{n}_{mp}(\tilde{g})
\Gamma^{m}_{qn}(\tilde{g}))\nonumber\\
&-\tilde{g}_{ql}\tilde{g}^{mn}(\delta_{p}\tilde{g}^{ls})
(\delta_{n}\tilde{g}_{sm}-\delta_{s}\tilde{g}_{mn}+\delta_{m}\tilde{g}_{ns})\nonumber\\
&+2\tilde{g}_{ql}(\delta_{p}\tilde{g}^{mn})(\Gamma(h)^{l}_{mn}-\Gamma(\tilde{g})^{l}_{mn})\nonumber\\
&+2\tilde{g}_{ql}\tilde{g}^{mn}\delta_{p}(\Gamma(h)^{l}_{mn})-2y^py^q\tilde{g}_{ql}\Gamma
(\tilde{g})^{l}_{pw}\xi^{w}+2\frac{l_{q}}{F}\tilde{g}_{np}\mathcal{L}_{\xi}y^{n}\Big)=0.\nonumber
\end{align}
On the other hand, applying  twice the vector field  $\frac{\delta}{\delta x^{n}}$ on the components of metric tensor  $\tilde{g}_{pq}$ yields
\begin{align*} \label{RE1}
\delta_{n}\delta_{m}\tilde{g}_{pq}=&\frac{\partial^{2}\tilde{g}_{pq}}{\partial x^{n}\partial x^{m}}-\frac{\partial N^{k}_{m}}{\partial x^{n}}\frac{\partial \tilde{g}_{pq}}{\partial y^{k}}-N^{k}_{m}\frac{\partial^{2}\tilde{g}_{pq}}{\partial x^{n}\partial y^{k}}-N^{l}_{n}\frac{\partial^{2}\tilde{g}_{pq}}{\partial y^{l}\partial x^{m}}\nonumber\\
&+N^{l}_{n}\frac{\partial N_{m}^{k}}{\partial y^{l}}\frac{\tilde{g}_{pq}}{\partial y^{k}}+
N^{l}_{n}N_{m}^k\frac{\partial^2\tilde{g}_{pq}}{\partial y^k\partial y^l}.\nonumber
\end{align*}
Convecting the last equation with $y^{p}y^{q}$ and using (\ref{RE3}) we have
\begin{equation*} 
y^{p}y^{q}\delta_{n}\delta_{m}\tilde{g}_{pq}
=y^{p}y^{q}\frac{\partial^{2}\tilde{g}_{pq}}{\partial x^{n}\partial x^{m}}.
\end{equation*}
Remark that, in the term $y^py^q\tilde{g}^{mn}\delta_{n}\delta_{m}\tilde{g}_{pq}$ in (\ref{36}), there is no term containing derivatives of $\tilde{g}$ except $y^{p}y^{q}\frac{\partial^{2}\tilde{g}_{pq}}{\partial x^{n}\partial x^{m}}$. One can rewrite (\ref{36+11}) as follows
\begin{align} \label{36+111}
y^{p}y^{q}\Big(\frac{\partial}{\partial t}\tilde{g}_{pq}(t)-\tilde{g}^{mn}\delta_{n}\delta_{m}\tilde{g}_{pq}-\tilde{F}^2 \tilde{g}^{mn}\frac{\partial^{2}\tilde{g}_{pq}}{\partial y^{n}\partial y^{m}}
+\textrm{lower order terms}\Big)=0.
\end{align}
Let us  set the term between the parentheses in \eqref{36+111} equal to zero, that is,
\begin{equation}\label{36+1111}
\frac{\partial}{\partial t}\tilde{g}_{pq}(t)-\tilde{g}^{mn}\delta_{n}\delta_{m}\tilde{g}_{pq}-\tilde{F}^2 \tilde{g}^{mn}\frac{\partial^{2}\tilde{g}_{pq}}{\partial y^{n}\partial y^{m}}
+\textrm{lower order terms}=0
\end{equation}
By restricting the metric tensor $\tilde{g}$ on $p^{*}TM$ and using Lemma \ref{mm} we can rewrite (\ref{36+1111}) in terms of the basis $\{\hat{e}_{a},\hat{e}_{n+\alpha}\}$ on $SM$ as follows
\begin{eqnarray}\label{asli}
\frac{\partial}{\partial t}\tilde{g}_{pq}=\tilde{g}^{ab}\hat{e}_{b}\hat{e}_{a}\tilde{g}_{pq}+
\tilde{g}^{\alpha\beta}\hat{e}_{n+\beta}\hat{e}_{n+\alpha}\tilde{g}_{pq}
-B^c\hat{e}_c\tilde{g}_{pq}
-D^\gamma\hat{e}_{n+\gamma}\tilde{g}_{pq}\nn\\
+\textrm{lower order terms}=0,
\end{eqnarray}
where, $B^c:=v^c_i\tilde{g}^{ab}\hat{e}_{b}(u^i_a)$ and $D^\gamma:=v^\gamma_i\tilde{F}\tilde{g}^{\alpha\beta}\hat{e}_{n+\beta}u^i_{\alpha}$
as mentioned in Lemma \ref{mm}.

By hypothesis, $M$ is compact and so is the sphere bundle $SM$. It is well known that  the metric tensor $\tilde{g}^{mn}$ remains positive definite along the Ricci flow, see \cite{YB2}, Corollary 3.7. Since the coefficients of the principal (second) order terms of (\ref{asli}) are  positive definite, by Definition \ref{semipar}, it is a  strictly parabolic semi-linear system on $SM$. Therefore, the standard existence and uniqueness theorem for parabolic systems on compact domains implies that, (\ref{asli}) has a unique solution on $SM$. The equation (\ref{asli}) is a special case of the general flow equation (\ref{AR}) and $\tilde{g}(t)$ is a solution. Therefore, by means of Lemma \ref{RE2}, $\tilde{g}(t)$ satisfies the integrability condition or equivalently, there exists a Finsler structure $\tilde{F}(t)$ on $TM$ such that $\tilde{g}_{ij}=\frac{1}{2}\frac{\partial^2 \tilde{F}}{\partial y^i\partial y^j}$.
Hence, $\tilde{g}$ is a Finsler metric and determines a Finsler structure  $\tilde{F}^{2}:=\tilde{g}_{pq}y^{p}y^{p}$ which is a solution to the Finsler Ricci-DeTurck flow. This completes the proof of Theorem \ref{main8}.\hspace{\stretch{1}}$\Box$
\begin{rem}\label{remarkunique}
In Theorem \ref{main8}, as it is mentioned earlier in the preliminaries, if the Finsler manifold $M$ is  isotropic, then $R^{\,\,n}_{q\,\,np}$ is symmetric with respect to the indices $p$ and $q$.
Therefore, by means of symmetry of $\mathcal{L}_{\xi}(y^{p}y^{q}\tilde{g}_{pq})$ on the indices $p$ and $q$, we conclude that the term in the brackets in (\ref{36+111}) is symmetric on the indices $p$ and $q$. If a symmetric bilinear form vanishes on the diagonal, then by the polarization identity it vanishes identically. Hence, according to (\ref{36+111}) we have
\begin{equation}
\frac{\partial}{\partial t}\tilde{g}_{pq}(t)-\tilde{g}^{mn}\delta_{n}\delta_{m}\tilde{g}_{pq}-\tilde{F}^2 \tilde{g}^{mn}\frac{\partial^{2}\tilde{g}_{pq}}{\partial y^{n}\partial y^{m}}
+\textrm{lower order terms}=0.
\end{equation}
As shown in the proof of Theorem \ref{main8}, by restricting the metric tensor $\tilde{g}$ on $p^{*}TM$ and using Lemma \ref{mm} we can rewrite (\ref{36+1111}) on $SM$ as (\ref{asli}) which is a strictly parabolic semi-linear  system on $SM$. Therefore, the standard existence and uniqueness theorem for parabolic systems on compact domains implies that, (\ref{asli}) has a unique solution on $SM$. Hence, Finsler structure $\tilde{F}^{2}:=\tilde{g}_{pq}y^{p}y^{p}$ is a unique solution to the Finsler Ricci-DeTurck flow. Therefore, in the isotropic case, the solution to the \eqref{22} is unique.
\end{rem}
\section{Short-time solution to the Ricci flow on Finsler manifolds}
In this section, we will prove that there is a one-to-one correspondence between the solutions of the Ricci flow and the Ricci-DeTurck flow on Finsler manifolds. Here, we recall some results which will be used in the sequel.
\begin{Lemma} \label{main9}
\cite[p.\ 82]{Chow1} Let $\{X_{t}:0\leq t<T\leq \infty\}$ be a continuous time-dependent family of vector fields on a compact manifold $M$. There exists a one-parameter family of diffeomorphisms $\{\varphi_{t}:M \longrightarrow M;\quad 0 \leq t<T \leq \infty\}$ defined on the same time interval such that
\begin{equation*}
\left\{
\begin{array}{l}
\frac{\partial}{\partial t} \varphi_{t}(x)=X_{t}[\varphi_{t}(x)],\cr
\varphi_{0}(x)=x,
\end{array}
\right.
\end{equation*}
for all $x\in M$ and $t\in[0,T)$.
\end{Lemma}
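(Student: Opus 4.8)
The plan is to reduce the statement to the classical existence, uniqueness and smooth-dependence theory for non-autonomous ordinary differential equations, and then to use the compactness of $M$ to upgrade the resulting local flow to a family of diffeomorphisms defined on the entire interval $[0,T)$. First I would fix a finite atlas of coordinate charts on $M$. In each chart the defining relation $\frac{\partial}{\partial t}\varphi_{t}(x)=X_{t}[\varphi_{t}(x)]$, $\varphi_{0}(x)=x$, becomes a system of first-order ODEs whose right-hand side is continuous in $t$ and smooth, hence locally Lipschitz, in the space variable. The Picard--Lindel\"of theorem then produces, for each initial point $x$, a unique maximal integral curve $t\mapsto\varphi_{t}(x)$ with $\varphi_{0}(x)=x$, together with smooth dependence of the curve on the initial datum $x$.

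The second step is to show that each integral curve is defined on all of $[0,T)$. This is where compactness is \emph{essential}: because $M$ is compact, an integral curve cannot leave every compact set in finite time, so the standard escape-to-infinity obstruction to global existence is absent. Concretely, I would cover $M$ by finitely many charts and, on each compact subinterval $[0,T']\subset[0,T)$, use a uniform bound on $X_{t}$ to rule out finite-time blow-up; a continuation argument then extends every integral curve up to time $T'$, and since $T'<T$ is arbitrary, up to time $T$. Local uniqueness guarantees that the curves emanating from distinct initial points never cross and patch together into a single well-defined map $\varphi_{t}:M\longrightarrow M$ for each $t\in[0,T)$.

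Finally I would verify that each $\varphi_{t}$ is a diffeomorphism. Smooth dependence on initial conditions already shows $\varphi_{t}$ is smooth. For invertibility I would pass to the two-parameter flow $\Psi_{s,t}$ that transports a point from its position at time $s$ to its position at time $t$ along the integral curves of $X$, so that $\varphi_{t}=\Psi_{0,t}$. Uniqueness of solutions yields the cocycle identities $\Psi_{r,t}\circ\Psi_{s,r}=\Psi_{s,t}$ and $\Psi_{t,t}=\mathrm{Id}$, whence $\Psi_{t,0}$ is a smooth two-sided inverse of $\varphi_{t}$. Thus each $\varphi_{t}$ is a diffeomorphism and $\{\varphi_{t}\}_{0\le t<T}$ is the desired one-parameter family.

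The step I expect to be the main obstacle is the global-existence argument. Unlike the autonomous case, where completeness follows from a one-parameter group structure, here the vector field depends on $t$ and one must argue completeness directly from the compactness of $M$ together with uniform control of $X_{t}$ on each compact time interval. Once global existence and smooth dependence are secured, the diffeomorphism property follows \emph{formally} from the cocycle identities of the flow, and the lemma is proved.
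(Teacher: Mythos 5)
The paper does not prove this lemma at all: it is quoted verbatim from Chow--Knopf \cite[p.\ 82]{Chow1} and used as a black box, so there is no in-paper argument to compare against. Your proposal is the standard proof of that cited result and is essentially correct: Picard--Lindel\"of in charts for the non-autonomous system, global existence on $[0,T)$ from compactness of $M$ (the escape lemma has nothing to escape to), and invertibility of each $\varphi_{t}$ from the two-parameter flow $\Psi_{s,t}$ and its cocycle identities, with $\Psi_{t,0}$ as the smooth inverse. The only point worth tightening is the regularity bookkeeping: mere continuity of $(t,x)\mapsto X_{t}(x)$ gives existence but not uniqueness or smooth dependence, so you should state explicitly that each $X_{t}$ is smooth (or at least $C^{1}$) in $x$, locally uniformly in $t$ --- which is what the hypothesis ``continuous time-dependent family of (smooth) vector fields'' is meant to encode, and what both your Lipschitz step and your smooth-dependence step silently use. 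With that made explicit, the argument is complete and matches what the cited reference does.
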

\begin{rem}\label{remark1}
Let $M$ be a compact Finsler manifold. According to Lemma \ref{main9}, there exists a unique one-parameter family of diffeomorphisms $\tilde{\varphi}_{t}$ on $SM$, such that
\begin{equation*}
\left\{
\begin{array}{l}
\frac{\partial}{\partial t}\varphi_{t}(z)=\xi(\varphi_{t}(z),t),\cr
\varphi_{0}=Id_{SM},
\end{array}
\right.
\end{equation*}
where, $z=(x,[y])\in SM$ and $t\in[0,T)$.
\end{rem}
\begin{rem}\label{remark2}
Let $\tilde{g}_{pq}$ be a solution to the Ricci-DeTurck flow and $\varphi_{t}$ the one-parameter global group of diffeomorphisms generating the vector field $\xi$. Since $\xi$ is a vector field on $SM$, then  $\varphi_{t}$ are homogeneous of degree zero. The zero-homogeneity of $\tilde{g}_{pq}$ implies that $\varphi_{t}^{*}(\tilde{g}_{pq})$ is also homogeneous of degree zero. That is,
\begin{equation*}
(\varphi_{t}^{*}\tilde{g}_{pq})(x,\lambda y)=\tilde{g}_{pq}(\varphi_{t}(x,\lambda y))=\tilde{g}_{pq}(\varphi_{t}(x,y))=
(\varphi_{t}^{*}\tilde{g}_{pq})(x,y).
\end{equation*}
Using the fact that $\tilde{g}_{pq}$ is positive definite and $\varphi_{t}^{*}$ are diffeomorphisms, $\varphi_{t}^{*}(\tilde{g}_{pq})$ is also positive definite. As well $\varphi_{t}^{*}(\tilde{g}_{pq})$ is symmetric. More intuitively,
\begin{equation*}
(\varphi_{t}^{*}\tilde{g})(X,Y)=g(\varphi_{t_{_*}}(X),\varphi_{t_{_*}}(Y))=g(\varphi_{t_{_*}}(Y),\varphi_{t_{_*}}(X))=(\varphi_{t}^{*}\tilde{g})(Y,X).
\end{equation*}
Therefore, $\varphi_{t}^{*}(\tilde{g}_{pq})$ determines a Finsler structure as follows
\begin{equation*}
F^{2}:=g_{pq}\tilde{y}^{p}\tilde{y}^{q},
\end{equation*}
where, $g_{pq}:=\varphi_{t}^{*}(\tilde{g}_{pq})$ and $\varphi_{t}^{*}y^p:=\tilde{y}^{p}$.
\end{rem}
\begin{lem}
Let $\varphi_{t}$ be a global one-parameter group of diffeomorphisms corresponding to the vector field $\xi$ and $(\gamma^{i}_{jk})_{\tilde{g}}$ and $(G^{i})_{\tilde{g}}$ are the Christoffel symbols and spray coefficients related to the Finsler metric $\tilde{g}$, respectively. Then we have
\begin{align}
&\varphi_{t}^{*}((\gamma_{jk}^{i})_{\tilde{g}})=(\gamma_{jk}^{i})_{\varphi_{t}^{*}(\tilde{g})},\label{Christoffel}\\
&\varphi_{t}^{*}(G^{i}_{\tilde{g}})=G^{i}_{\varphi_{t}^{*}(\tilde{g})},\label{spray}
\end{align}
where, $(\gamma_{jk}^{i})_{\tilde{g}}=\tilde{g}^{is}\frac{1}{2}(\frac{\partial \tilde{g}_{sj}}{\partial x^k}-\frac{\partial \tilde{g}_{jk}}{\partial x^s}+\frac{\partial \tilde{g}_{ks}}{\partial x^j})$ and $G^{i}_{\tilde{g}}=\frac{1}{2}(\gamma^{i}_{jk})_{\tilde{g}}y^jy^k$.
\end{lem}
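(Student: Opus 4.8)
The plan is to prove \eqref{Christoffel} first, since \eqref{spray} will then follow by contracting with $y^jy^k$. Throughout, I read $\varphi_{t}^{*}((\gamma^i_{jk})_{\tilde g})$ as the transform of the connection coefficients of $\tilde g$ under the diffeomorphism $\varphi_t$, carrying the usual inhomogeneous affine term, while $(\gamma^i_{jk})_{\varphi_t^*(\tilde g)}$ denotes the formal Christoffel symbols computed directly from the pulled-back metric $\varphi_t^*(\tilde g)$. In this reading \eqref{Christoffel} is exactly the naturality of the Levi--Civita construction $g\mapsto\gamma(g)$ with respect to diffeomorphisms, performed at each fixed direction $y$.

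For \eqref{Christoffel} I would argue by the uniqueness characterization of the formal Christoffel symbols. At each fixed $y$, the array $(\gamma^i_{jk})_{\tilde g}$ is the unique one that is symmetric in the lower indices $j,k$ and satisfies the compatibility identity $\partial_k\tilde g_{ij}=(\gamma^l_{ik})_{\tilde g}\,\tilde g_{lj}+(\gamma^l_{jk})_{\tilde g}\,\tilde g_{il}$, which is equivalent to the displayed Koszul-type formula. Both objects appearing in \eqref{Christoffel} satisfy these two properties relative to the metric $\varphi_t^*(\tilde g)$: symmetry in $j,k$ is manifestly preserved under pullback, and the compatibility identity is preserved because $\partial_k$ commutes with $\varphi_t^*$ up to the chain rule and because $\varphi_t^*$ respects the contraction with $\tilde g$. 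Uniqueness then forces the two sides to coincide. Alternatively, in the computational spirit of Lemma \ref{main2}, one may substitute $\tilde x^i=\varphi_t^*x^i$, $\tilde y^i=\varphi_t^*y^i$ and $(\varphi_t^*\tilde g)_{ij}$ directly into the definition of $(\gamma^i_{jk})_{\varphi_t^*(\tilde g)}$ and collect terms, checking that the second-derivative contributions produced by the chain rule reassemble into precisely the inhomogeneous part of $\varphi_t^*((\gamma^i_{jk})_{\tilde g})$.

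Given \eqref{Christoffel}, equation \eqref{spray} follows by contracting with $y^jy^k$ and invoking $G^i_{\tilde g}=\tfrac12(\gamma^i_{jk})_{\tilde g}y^jy^k$ together with $\tilde y^p=\varphi_t^*y^p$; geometrically this is the assertion that $\varphi_t$ carries the geodesic spray of $\tilde g$ to that of $\varphi_t^*(\tilde g)$, i.e. geodesics map to geodesics, which is automatic for a diffeomorphism. The step I expect to be the main obstacle is the bookkeeping of the mixing of the $x$ and $y$ variables under $\varphi_t$: the formal Christoffel symbols involve only the derivatives $\partial_{x^k}$ taken at fixed $y$, whereas the flow of $\xi=\xi^i(x,y)\frac{\partial}{\partial x^i}$ in general moves $y$ as well, so one must verify that $\varphi_t^*$ genuinely commutes with ``$\partial_{x^k}$ at fixed $y$'' up to the correct cross-terms involving $\frac{\partial\tilde g}{\partial y}$. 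Confirming that these cross-terms cancel against the inhomogeneous affine contributions is the delicate point; once it is settled, both \eqref{Christoffel} and \eqref{spray} follow as above.
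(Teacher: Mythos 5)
Your proposal is essentially correct, but it proves \eqref{Christoffel} by a different mechanism than the paper. The paper's proof is the short, purely formal computation that you sketch only as your fallback ``alternative'': writing $\tilde{x}^i=\varphi_t^*x^i$, $\tilde{y}^i=\varphi_t^*y^i$, it distributes $\varphi_t^*$ over the product $\tilde{g}^{is}\cdot\frac{1}{2}(\partial_k\tilde{g}_{sj}-\partial_s\tilde{g}_{jk}+\partial_j\tilde{g}_{ks})$, commutes the pullback with $\partial/\partial x^k$ to obtain $\partial(\varphi_t^*\tilde{g}_{sj})/\partial\tilde{x}^k$, and identifies the result as $(\gamma^i_{jk})_{\varphi_t^*(\tilde{g})}$; the spray identity \eqref{spray} then follows by contracting with $\varphi_t^*y^j\varphi_t^*y^k$ exactly as you do. Note in particular that the paper's $\varphi_t^*$ is plain scalar composition of each component function with $\varphi_t$ --- there is \emph{no} inhomogeneous affine term anywhere in its computation --- so your primary reading of the left-hand side of \eqref{Christoffel} as the full connection-coefficient transformation law is a different convention from the one the paper uses, and your uniqueness argument (symmetry in $j,k$ plus the compatibility identity $\partial_k\tilde{g}_{ij}=(\gamma^l_{ik})\tilde{g}_{lj}+(\gamma^l_{jk})\tilde{g}_{il}$, fixed-$y$ Koszul uniqueness) establishes the naturality of the Levi--Civita construction rather than the notational identity the paper actually manipulates. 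Your route buys a coordinate-free justification that does not depend on trusting the formal commutation of $\varphi_t^*$ with $\partial/\partial x^k$; the paper's route buys brevity and matches the style of Lemma \ref{main2}. The obstacle you flag --- that $\xi=\xi^i(x,y)\frac{\partial}{\partial x^i}$ generates a flow on $TM$ (or $SM$) which in general entangles the $x$ and $y$ variables, so that ``$\partial_{x^k}$ at fixed $y$'' need not commute cleanly with $\varphi_t^*$ --- is a legitimate concern, but you should be aware that the paper's own proof does not address it either: it simply asserts the commutation in the passage from the second to the third line. So while you leave that point unresolved, you are not missing an idea that the paper supplies; you have merely made explicit a gap that the paper's formal computation papers over.
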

\begin{proof}
Let us denote $\varphi_{t}^{*}x^i:=\tilde{x}^i$ and $\varphi_{t}^{*}y^i:=\tilde{y}^i$. By definition, we have
\begin{align*}
\varphi_{t}^{*}((\gamma^{i}_{jk})_{\tilde{g}})&=\varphi_{t}^{*}(\tilde{g}^{is}\frac{1}{2}(\frac{\partial \tilde{g}_{sj}}{\partial x^k}-\frac{\partial \tilde{g}_{jk}}{\partial x^s}+\frac{\partial \tilde{g}_{ks}}{\partial x^j}))\\
&=\varphi_{t}^{*}(\tilde{g}^{is})\varphi_{t}^{*}(\frac{1}{2}(\frac{\partial \tilde{g}_{sj}}{\partial x^k}-\frac{\partial \tilde{g}_{jk}}{\partial x^s}+\frac{\partial \tilde{g}_{ks}}{\partial x^j}))\\
&=\varphi_{t}^{*}(\tilde{g}^{is})\frac{1}{2}(\frac{\partial\varphi_{t}^{*}( \tilde{g}_{sj})}{\partial \tilde{x}^k}-\frac{\partial\varphi_{t}^{*}( \tilde{g}_{jk})}{\partial \tilde{x}^s}+\frac{\partial\varphi_{t}^{*}(\tilde{g}_{ks})}{\partial \tilde{x}^j})\\
&=(\gamma_{jk}^{i})_{\varphi_{t}^{*}(\tilde{g})}.
\end{align*}
Next, by means of (\ref{Christoffel}) we have
\begin{align*}
\varphi_{t}^{*}(G^{i}_{\tilde{g}})&=\varphi_{t}^{*}(\frac{1}{2}(\gamma^{i}_{jk})_{\tilde{g}}y^jy^k)=\frac{1}{2}\varphi_{t}^{*}((\gamma^{i}_{jk})_{\tilde{g}})\varphi_{t}^{*}y^j\varphi_{t}^{*}y^k\\
&=\frac{1}{2}(\gamma^{i}_{jk})_{\varphi_{t}^{*}(\tilde{g})}\tilde{y}^{j}\tilde{y}^{k}=G^{i}_{\varphi_{t}^{*}(\tilde{g})}.
\end{align*}
This completes the proof.
\end{proof}
\begin{lem}\label{lemmohem}
Let $\varphi_{t}$ be a global one-parameter group of diffeomorphisms generating the vector field $\xi$ and $\mathcal{R}ic_{\tilde{g}}$ the Ricci scalar related to the Finsler metric $\tilde{g}$, then we have
\begin{equation*}
\varphi_{t}^{*}(\mathcal{R}ic_{\tilde{g}})=\mathcal{R}ic_{\varphi_{t}^{*}(\tilde{g})}.
\end{equation*}
\end{lem}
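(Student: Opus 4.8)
The plan is to exploit the fact that the Ricci scalar is built entirely out of the spray, whose naturality under $\varphi_t$ has just been established in \eqref{spray}. Recall that $\mathcal{R}ic_{\tilde{g}}=(R^i_{\,\,i})_{\tilde{g}}$, where by \eqref{18} the reduced $hh$-curvature $R^i_{\,\,k}$ is a universal expression in the spray coefficients $G^i$ and their horizontal and vertical partial derivatives, multiplied by the scalar factor $F^{-2}$. In particular $R^i_{\,\,k}$ depends on the metric only through the spray $G^i_{\tilde g}$. Thus it suffices to establish first the intermediate naturality
\begin{equation*}
\varphi_t^*\big((R^i_{\,\,k})_{\tilde{g}}\big)=(R^i_{\,\,k})_{\varphi_t^*(\tilde{g})},
\end{equation*}
and then to contract over $i=k$.

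First I would record the elementary but essential fact that pull-back by $\varphi_t$ commutes with the coordinate partial derivatives once one passes to the pulled-back coordinates $\tilde{x}^i:=\varphi_t^*x^i$ and $\tilde{y}^i:=\varphi_t^*y^i$; that is, for any function $f$ on $TM$,
\begin{equation*}
\varphi_t^*\Big(\frac{\partial f}{\partial x^k}\Big)=\frac{\partial(\varphi_t^*f)}{\partial \tilde{x}^k},\qquad \varphi_t^*\Big(\frac{\partial f}{\partial y^k}\Big)=\frac{\partial(\varphi_t^*f)}{\partial \tilde{y}^k}.
\end{equation*}
This is just the chain rule, and it is exactly the mechanism already used in the proof of the naturality \eqref{Christoffel} of the Christoffel symbols. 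Applying $\varphi_t^*$ to the right-hand side of \eqref{18} term by term, every occurrence of $G^i_{\tilde{g}}$ together with its $x$- and $y$-derivatives is converted, using this commutation property and the spray naturality \eqref{spray}, into $G^i_{\varphi_t^*(\tilde{g})}$ and its derivatives in the tilde coordinates. The prefactor $F^{-2}$ is handled by the same principle, since $F^2=\tilde{g}_{pq}y^p y^q$ pulls back to $(\varphi_t^*\tilde{g})_{pq}\,\tilde{y}^p\tilde{y}^q$, which is precisely the squared Finsler norm associated with the metric $\varphi_t^*(\tilde{g})$. Collecting these substitutions yields the intermediate naturality displayed above.

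Finally I would take the trace. Since contraction over a pair of indices is a pointwise algebraic operation that commutes with the pull-back of scalar functions, setting $i=k$ and summing in the intermediate identity gives
\begin{equation*}
\varphi_t^*(\mathcal{R}ic_{\tilde{g}})=\varphi_t^*\big((R^i_{\,\,i})_{\tilde{g}}\big)=(R^i_{\,\,i})_{\varphi_t^*(\tilde{g})}=\mathcal{R}ic_{\varphi_t^*(\tilde{g})},
\end{equation*}
which is the assertion. The only point requiring genuine care is the coherent bookkeeping of the mixed horizontal and vertical derivatives and of the $F^{-2}$ factor in \eqref{18} under pull-back; however, this is routine once \eqref{spray} is available, so the substantive content of the lemma is really already contained in the naturality of the spray, and no new estimate is needed.
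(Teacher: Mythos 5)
Your proposal is correct and follows essentially the same route as the paper: both arguments pull back the expression \eqref{18} for the reduced $hh$-curvature term by term, using the chain rule to commute $\varphi_t^*$ with the $x$- and $y$-derivatives in the tilde coordinates, invoke the spray naturality \eqref{spray}, and then contract $i=k$. No substantive difference to report.
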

\begin{proof}
Let us consider the \emph{reduced $hh$-curvature tensor} $R^{i}_{\,\,k}$ which is expressed entirely in terms of the $x$ and $y$ derivatives of the spray coefficients $G^{i}_{\tilde{g}}$.
\begin{equation*}
(R^{i}_{\,\,k})_{\tilde{g}}:=\frac{1}{\tilde{F}^2}(2\frac{\partial G^{i}_{\tilde{g}}}{\partial x^{k}}-\frac{\partial^{2}G^{i}_{\tilde{g}}}{\partial x^j\partial y^k}y^{j}+2G^{j}_{\tilde{g}}\frac{\partial^{2}G^{i}_{\tilde{g}}}{\partial y^j\partial y^k}-\frac{\partial G^{i}_{\tilde{g}}}{\partial y^{j}}\frac{\partial G^{j}_{\tilde{g}}}{\partial y^{k}}).
\end{equation*}
Therefore, we have
\begin{align*}
\varphi_{t}^{*}((R^{i}_{\,\,k})_{\tilde{g}})&=\varphi_{t}^{*}
(\frac{1}{\tilde{F}^2}(2\frac{\partial G^{i}_{\tilde{g}}}{\partial x^{k}}-\frac{\partial^{2}G^{i}_{\tilde{g}}}{\partial x^j\partial y^k}y^{j}+2G^{j}_{\tilde{g}}\frac{\partial^{2}G^{i}_{\tilde{g}}}{\partial y^j\partial y^k}-\frac{\partial G^{i}_{\tilde{g}}}{\partial y^{j}}\frac{\partial G^{j}_{\tilde{g}}}{\partial y^{k}}))\nonumber\\
&=\varphi_{t}^{*}(\frac{1}{\tilde{F}^2})\varphi_{t}^{*}(2\frac{\partial G^{i}_{\tilde{g}}}{\partial x^{k}}-\frac{\partial^{2}G^{i}_{\tilde{g}}}{\partial x^j\partial y^k}y^{j}+2G^{j}_{\tilde{g}}\frac{\partial^{2}G^{i}_{\tilde{g}}}{\partial y^j\partial y^k}-\frac{\partial G^{i}_{\tilde{g}}}{\partial y^{j}}\frac{\partial G^{j}_{\tilde{g}}}{\partial y^{k}}).
\end{align*}
Thus, we get
\begin{align*}
\varphi_{t}^{*}((R^{i}_{\,\,k})_{\tilde{g}})=&\frac{1}{\varphi_{t}^{*}(\tilde{F}^2)}(2\frac{\partial(\varphi_{t}^{*} (G^{i}_{\tilde{g}}))}{\partial \tilde{x}^{k}}-\frac{\partial^{2}(\varphi_{t}^{*}(G^{i}_{\tilde{g}}))}{\partial \tilde{x}^j\partial \tilde{y}^k}\tilde{y}^{j}\nonumber\\
&+2\varphi_{t}^{*}(G^{j}_{\tilde{g}})\frac{\partial^{2}(\varphi_{t}^{*}(G^{i}_{\tilde{g}}))}{\partial \tilde{y}^j\partial \tilde{y}^k}-\frac{\partial(\varphi_{t}^{*}( G^{i}_{\tilde{g}}))}{\partial \tilde{y}^{j}}\frac{\partial (\varphi_{t}^{*}(G^{j}_{\tilde{g}}))}{\partial \tilde{y}^{k}}).
\end{align*}
Letting $i=k$ in the last equation and using (\ref{spray})  implies
\begin{equation*}
\varphi_{t}^{*}(\mathcal{R}ic_{\tilde{g}})=\mathcal{R}ic_{\varphi_{t}^{*}(\tilde{g})},
\end{equation*}
as we have claimed.
\end{proof}
Now we are in a position to prove the following proposition.
\begin{prop} \label{main11}
Fix a compact Finsler manifold $(M,\bar{F})$ with the related Finsler metric tensor $h$. Let $\tilde{F}(t)$ be a family of solutions to the Ricci-DeTurck flow
\begin{eqnarray} \label{46}
\frac{\partial}{\partial t}\tilde{F}^{2}(t)=-2\tilde{F}^{2}(t)\mathcal{R}ic(\tilde{g}(t))-\mathcal{L}_{\xi}\tilde{F}^{2}(t),
\end{eqnarray}
where, $\xi=(\Phi_{\tilde{g}(t),h}Id)^{i}\frac{\partial}{\partial x^i}$ and $t\in[0,T)$. Moreover, let $\varphi_{t}$ be a one-parameter family of diffeomorphisms satisfying
\begin{eqnarray*} 
\frac{\partial}{\partial t}\varphi_{t}(z)=\xi(\varphi_{t}(z),t),\nonumber
\end{eqnarray*}
for $z\in SM$ and $t\in[0,T)$. Then the Finsler structures $F(t)$ form a solution to the Finslerian Ricci flow (\ref{20}) where, $F(t)$ is defined by
\begin{eqnarray*} 
F^{2}(t):=g_{pq}\tilde{y}^{p}\tilde{y}^{q}=\varphi_{t}^{*}(\tilde{F}^{2}(t)),\nonumber
\end{eqnarray*}
where, $g_{pq}:=\varphi_{t}^{*}(\tilde{g}_{pq})$ and $\varphi_{t}^{*}y^p:=\tilde{y}^{p}$.
\end{prop}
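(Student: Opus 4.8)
The plan is to run the classical DeTurck trick: I would show that pulling the Ricci-DeTurck flow back by the diffeomorphisms $\varphi_t$ it generates exactly cancels the extra Lie-derivative term, leaving the genuine Ricci flow. The essential ingredient is the formula for differentiating the pullback of a time-dependent object by a time-dependent family of diffeomorphisms. Since, by Remark \ref{remark1}, the family $\varphi_t$ on $SM$ is generated by $\xi$, that is $\frac{\partial}{\partial t}\varphi_t=\xi\circ\varphi_t$, the chain rule applied to the scalar $\tilde F^2(t)$ on $SM$ yields
\[
\frac{\partial}{\partial t}\varphi_t^{*}\big(\tilde F^2(t)\big)=\varphi_t^{*}\Big(\frac{\partial}{\partial t}\tilde F^2(t)\Big)+\varphi_t^{*}\big(\mathcal{L}_{\xi}\tilde F^2(t)\big).
\]
This is the step I would establish first, as everything else follows by substitution.

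Next I would insert the Ricci-DeTurck flow equation (\ref{46}) for $\frac{\partial}{\partial t}\tilde F^2(t)$ into the right-hand side. The term $-\mathcal{L}_{\xi}\tilde F^2(t)$ carried by (\ref{46}), once pulled back, cancels against the summand $\varphi_t^{*}(\mathcal{L}_{\xi}\tilde F^2(t))$ produced by the time-dependence of $\varphi_t$, so that
\[
\frac{\partial}{\partial t}F^{2}(t)=\varphi_t^{*}\big(-2\tilde F^2(t)\,\mathcal{R}ic(\tilde g(t))\big)=-2\,\varphi_t^{*}(\tilde F^2(t))\,\varphi_t^{*}\big(\mathcal{R}ic(\tilde g(t))\big).
\]
I would then invoke Lemma \ref{lemmohem}, which gives the naturality of the Ricci scalar under pullback, $\varphi_t^{*}(\mathcal{R}ic_{\tilde g})=\mathcal{R}ic_{\varphi_t^{*}(\tilde g)}=\mathcal{R}ic(g(t))$, where $g(t):=\varphi_t^{*}(\tilde g(t))$ is the metric associated with $F(t)$ (whose symmetry, positive-definiteness, and zero-homogeneity are guaranteed by Remark \ref{remark2}). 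Combining this with $\varphi_t^{*}(\tilde F^2(t))=F^2(t)$ gives
\[
\frac{\partial}{\partial t}F^{2}(t)=-2\,F^{2}(t)\,\mathcal{R}ic(g(t)),
\]
which is exactly the Finslerian Ricci flow (\ref{20}); the initial condition $F(0)=F_0$ follows from $\varphi_0=Id$.

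The hard part will be the careful justification of the pullback-differentiation identity in the first paragraph within the Finsler framework. Although $\tilde F^2$ is a scalar on $SM$, so that its Lie derivative is the directional action $\xi(\tilde F^2)$, one must verify that $\varphi_t$ acts consistently both on $SM$ and on the associated metric data, and that the identification $F^2(t)=\varphi_t^{*}(\tilde F^2(t))=g_{pq}\tilde y^p\tilde y^q$ respects the zero-homogeneity required for $\mathcal{R}ic$ and $\mathcal{L}_{\xi}$ to be well defined on $SM$. These are precisely the compatibilities furnished by Remarks \ref{remark1}–\ref{remark2} and Lemma \ref{lemmohem}, so the argument reduces to assembling them correctly rather than to any new estimate.
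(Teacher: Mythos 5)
Your proposal is correct and follows essentially the same route as the paper: the pullback--differentiation identity for the time-dependent family $\varphi_t$, substitution of the Ricci--DeTurck equation so the Lie-derivative terms cancel, and Lemma \ref{lemmohem} for the naturality of $\mathcal{R}ic$ under pullback. The only cosmetic differences are that you work with $\tfrac{\partial}{\partial t}F^2=-2F^2\mathcal{R}ic$ rather than the equivalent logarithmic form $\tfrac{\partial}{\partial t}\log F=-\mathcal{R}ic$, and you write the correction term as $\varphi_t^{*}(\mathcal{L}_{\xi}\tilde F^2)$ instead of $\mathcal{L}_{(\varphi_t^{-1})_{*}\xi}\,\varphi_t^{*}(\tilde F^2)$, which are equal by naturality of the Lie derivative.
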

\begin{proof}
In order to show that $F(t)$ forms a solution to the Finslerian Ricci flow (\ref{20}) we need to show that  $\frac{\partial}{\partial t}(\log F(t))=-\mathcal{R}ic.$
Derivation of $F^{2}(t)=\varphi_{t}^{*}(\tilde{F}^{2}(t))$ with respect to the parameter $t$, leads to
\begin{equation} \label{50}
\frac{\partial}{\partial t}(\log F(t))=\frac{1}{2}\frac{\frac{\partial}{\partial t}(\varphi_{t}^{*}(\tilde{F}^{2}(t)))}{\varphi_{t}^{*}(\tilde{F}^{2}(t))}.
\end{equation}
The term $\frac{\partial}{\partial t}(\varphi_{t}^{*}\tilde{F}^{2}(t))$ becomes
\begin{eqnarray} \label{51}
\frac{\partial}{\partial t}(\varphi_{t}^{*}\tilde{F}^{2}(t))&=&\frac{\partial}{\partial s}(\varphi_{s+t}^{*}(\tilde{F}^{2}(s+t)))\mid_{s=0}\\
&=&\varphi_{t}^{*}(\frac{\partial}{\partial t}\tilde{F}^{2}(t))+\frac{\partial}{\partial s}(\varphi_{s+t}^{*}(\tilde{F}^{2}(t)))\mid_{s=0}\nonumber \\
&=&\varphi_{t}^{*}(\frac{\partial}{\partial t}\tilde{F}^{2}(t))+\frac{\partial}{\partial s}((\varphi_{t}^{-1}\circ\hspace{0.1cm}\varphi_{t+s})^{*}(\varphi_{t}^{*}(\tilde{F}^{2}(t))))\mid_{s=0}\nonumber\\
&=&\varphi_{t}^{*}(\frac{\partial}{\partial t}\tilde{F}^{2}(t))+\mathcal{L}_{\frac{\partial}{\partial s}(\varphi_{t}^{-1}\circ\hspace{0.1cm}\varphi_{t+s})\mid_{s=0}}\varphi_{t}^{*}(\tilde{F}^{2}(t)).\nonumber
\end{eqnarray}
On the other hand, we have
\begin{eqnarray*} 
\frac{\partial}{\partial s}(\varphi_{t}^{-1}\circ\hspace{0.1cm}\varphi_{t+s})\mid_{s=0}=(\varphi_{t}^{-1})_{*}((\frac{\partial}{\partial s}\varphi_{s+t})\mid_{s=0})=(\varphi_{t}^{-1})_{*}(\xi).
\end{eqnarray*}
Hence, (\ref{51}) is written
\begin{eqnarray*}
\frac{\partial}{\partial t}(\varphi_{t}^{*}\tilde{F}^{2}(t))=\varphi_{t}^{*}(\frac{\partial}{\partial t}\tilde{F}^{2}(t))+\mathcal{L}_{(\varphi_{t}^{-1})_{*}(\xi)}\varphi_{t}^{*}(\tilde{F}^{2}(t)).
\end{eqnarray*}
Replacing the last equation in (\ref{50}) and using the assumption (\ref{46}) we get
\begin{eqnarray*}
\frac{\partial}{\partial t}(\log F(t))&=&\frac{1}{2}\frac{\varphi_{t}^{*}(\frac{\partial}{\partial t}\tilde{F}^{2}(t))+\mathcal{L}_{(\varphi_{t}^{-1})_{*}(\xi)}
\varphi_{t}^{*}(\tilde{F}^{2}(t))}{\varphi_{t}^{*}(\tilde{F}^{2}(t))} \\
&=&\frac{1}{2}\frac{\varphi_{t}^{*}(-2\tilde{F}^{2}(t)\mathcal{R}ic(\tilde{F}(t))-\mathcal{L}_{\xi}\tilde{F}^{2}(t))+
\mathcal{L}_{(\varphi_{t}^{-1})_{*}(\xi)}\varphi_{t}^{*}(\tilde{F}^{2}(t))}{\varphi_{t}^{*}(\tilde{F}^{2}(t))}\\
&=&\frac{1}{2}\frac{\varphi_{t}^{*}(-2\tilde{F}^{2}(t)\mathcal{R}ic(\tilde{F}(t)))-\varphi_{t}^{*}(\mathcal{L}_{\xi}\tilde{F}^{2}(t)))
+\mathcal{L}_{(\varphi_{t}^{-1})_{*}(\xi)}\varphi_{t}^{*}(\tilde{F}^{2}(t))}{\varphi_{t}^{*}(\tilde{F}^{2}(t))}\\
&=&\frac{1}{2}\frac{-2\varphi_{t}^{*}(\tilde{F}^{2}(t))\varphi_{t}^{*}(\mathcal{R}ic(\tilde{F}(t)))}{\varphi_{t}^{*}(\tilde{F}^{2}(t))}.
\end{eqnarray*}
By virtue of Lemma \ref{lemmohem} we have
\begin{eqnarray*}
\frac{\partial}{\partial t}(\log F(t))
=-\varphi_{t}^{*}(\mathcal{R}ic(\tilde{F}(t)))
=-\mathcal{R}ic_{\varphi_{t}^{*}(\tilde{F}(t))}
=-\mathcal{R}ic_{F(t)}.
\end{eqnarray*}
Therefore, the Finsler structures $F(t)$ form a solution to the Finslerian Ricci flow. Hence the proof is complete.
\end{proof}
{\bf Proof of Theorem \ref{main14}.}   We check the existence of a solution to the Finslerian Ricci flow.   Recall that Theorem \ref{main8} asserts that, there exists a solution $\tilde{F}(t)$ to the Finslerian Ricci-DeTurck flow (\ref{22}) which is defined on some time interval $[0,T)$ and satisfies $\tilde{F}(0)=F_{0}$. Let $\varphi_{t}$ be the solution of the ODE
\begin{eqnarray*}
\frac{\partial}{\partial t}\varphi_{t}(z)=(\Phi_{\tilde{g}(t),h}Id)(\varphi_{t}(z),t)=\xi(\varphi_{t}(z),t),\nonumber
\end{eqnarray*}
with the initial condition $\varphi_{0}(z)=z$, for $z\in SM$ and $t\in[0,T)$. By Proposition \ref{main11}, the Finsler structures $F^{2}(t)=\varphi^{*}_{t}(\tilde{F}^{2}(t))$ form a solution to the Finslerian Ricci flow (\ref{20}) with $F(0)=F_{0}$. This completes the proof of Theorem \ref{main14}.\hspace{\stretch{1}}$\Box$

In the following proposition,  we assume that there exists a solution to the Finslerian Ricci flow, from which we construct a solution to the Ricci-DeTurck flow.
\begin{prop}\label{main12}
Consider  a fixed compact Finsler manifold $(M,\bar{F})$, with the related Finsler metric tensor $h$. Let $F(t)$, $t\in[0,T)$, be a family of solutions to the Ricci flow and $\varphi_{t}$ a one-parameter family of diffeomorphisms on $SM$ deforming under the following flow,
\begin{equation} \label{55}
\frac{\partial}{\partial t}\varphi_{t}=\Phi_{g(t),h}\varphi_{t}.\nonumber
\end{equation}
Then the Finsler structures $\tilde{F}(t)$ defined by $F^{2}(t)=\varphi^{*}_{t}(\tilde{F}^{2}(t))$ form a solution to the following Ricci-DeTurck flow
\begin{equation} \label{56}
\frac{\partial}{\partial t}\tilde{F}^{2}(t)=-2\tilde{F}^{2}(t)\mathcal{R}ic(\tilde{g}(t))-\mathcal{L}_{\xi}\tilde{F}^{2}(t),\nonumber
\end{equation}
where, $\xi=(\Phi_{\tilde{g}(t),h}Id)^{i}\frac{\partial}{\partial x^i}$ and $\tilde{g}(t)$ is the metric tensor related to $\tilde{F}(t)$. Furthermore, for all $z\in SM$ and $t\in[0,T)$ we have
\begin{equation*}
\frac{\partial}{\partial t}\varphi_{t}(z)=\xi(\varphi_{t}(z),t).
\end{equation*}
\end{prop}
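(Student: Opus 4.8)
The plan is to run the DeTurck correspondence of Proposition \ref{main11} in reverse. Given the Ricci-flow solution $F(t)$ with fundamental tensor $g(t)$, I define the candidate Ricci--DeTurck solution by $\tilde F^2(t):=(\varphi_t^{-1})^*(F^2(t))$, equivalently $F^2(t)=\varphi_t^*(\tilde F^2(t))$, and denote by $\tilde g(t)$ its fundamental tensor, so that $\tilde g(t)=(\varphi_t^{-1})^*(g(t))$. First I would record that the harmonic-map-type flow $\partial_t\varphi_t=\Phi_{g(t),h}\varphi_t$ with $\varphi_0=Id$ admits a short-time solution that remains a diffeomorphism: restricted to $SM$ it is strictly parabolic by Lemma \ref{mm} (exactly as in Remark \ref{main3+1}), and since $\varphi_0=Id$ is a diffeomorphism this persists for small $t$. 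Because $\xi$ is zero-homogeneous and $\tilde g$ inherits zero-homogeneity and positive-definiteness from $g$ (as in Remark \ref{remark2}), all objects descend to $SM$ and $\tilde F^2=\tilde g_{pq}y^py^q$ is a genuine Finsler structure.

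The decisive step is the pointwise identity
\begin{equation*}
(\Phi_{g(t),h}\varphi_t)^\alpha\big|_{z}=\xi^\alpha(\varphi_t(z),t),\qquad \xi=(\Phi_{\tilde g(t),h}Id)^i\frac{\partial}{\partial x^i}.
\end{equation*}
I would obtain it from the invariance Lemma \ref{main2} applied with $\psi=\varphi_t$, source metric $\tilde g$, and the map $Id:TM\to TM$: since $\varphi_t^*(\tilde g)=g$ and $\varphi_t^*(Id)=Id\circ\varphi_t=\varphi_t$, the lemma reads $(\Phi_{g,h}\varphi_t)^\alpha|_{z}=(\Phi_{\tilde g,h}Id)^\alpha|_{\varphi_t(z)}=\xi^\alpha\circ\varphi_t$. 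Combined with the defining flow $\partial_t\varphi_t=\Phi_{g(t),h}\varphi_t$, this is precisely the last assertion $\partial_t\varphi_t(z)=\xi(\varphi_t(z),t)$, showing that the PDE governing $\varphi_t$ coincides with the ODE generating $\xi$.

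It then remains to verify that $\tilde F(t)$ solves the Ricci--DeTurck flow. I would differentiate $F^2(t)=\varphi_t^*(\tilde F^2(t))$ in $t$ by the same variation-of-pullback computation as in \eqref{51}, obtaining (using $\varphi_t^*\tilde F^2=F^2$)
\begin{equation*}
\frac{\partial}{\partial t}F^2(t)=\varphi_t^*\!\Big(\frac{\partial}{\partial t}\tilde F^2(t)\Big)+\mathcal{L}_{(\varphi_t^{-1})_*(\xi)}F^2(t).
\end{equation*}
Since $F(t)$ solves the Ricci flow, $\partial_t F^2(t)=-2F^2(t)\mathcal{R}ic(g(t))$ by \eqref{20}. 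Applying the fixed-$t$ diffeomorphism $(\varphi_t^{-1})^*$ to the whole identity, the first term becomes $\partial_t\tilde F^2$, the naturality $(\varphi_t^{-1})^*\mathcal{L}_{(\varphi_t^{-1})_*(\xi)}=\mathcal{L}_{\xi}(\varphi_t^{-1})^*$ converts the Lie-derivative term into $\mathcal{L}_\xi\tilde F^2$, and Lemma \ref{lemmohem}, used for the diffeomorphism $\varphi_t^{-1}$, converts $(\varphi_t^{-1})^*\mathcal{R}ic(g)$ into $\mathcal{R}ic(\tilde g)$. This yields exactly $\partial_t\tilde F^2=-2\tilde F^2\mathcal{R}ic(\tilde g)-\mathcal{L}_\xi\tilde F^2$.

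The main obstacle is the bookkeeping in the decisive identity: one must orient Lemma \ref{main2} correctly (which metric is pulled back to which, and whether the two sides are evaluated at $z$ or at $\varphi_t(z)$), and simultaneously confirm that $\tilde g$, $\xi$, and $\varphi_t$ all stay zero-homogeneous so that the construction genuinely lives on the $(2n-1)$-dimensional sphere bundle rather than on $TM_0$. Ensuring $\varphi_t$ remains a diffeomorphism on the entire short-time interval, so that $(\varphi_t^{-1})^*$ is defined throughout, is the accompanying analytic point.
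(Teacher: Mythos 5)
Your proposal is correct and follows essentially the same route as the paper: the key identity $\partial_t\varphi_t(z)=\xi(\varphi_t(z),t)$ is obtained from the invariance Lemma \ref{main2} exactly as in the paper's chain $\Phi_{g(t),h}\varphi_t=\Phi_{\varphi_t^*(\tilde g(t)),h}\varphi_t^*Id=\Phi_{\tilde g(t),h}Id=\xi$, and the Ricci--DeTurck equation is then derived from the variation-of-pullback formula \eqref{51} together with Lemma \ref{lemmohem}. The only differences are cosmetic (you apply $(\varphi_t^{-1})^*$ to the whole identity rather than dividing by $\varphi_t^*(\tilde F^2)$ and setting the numerator to zero, and you spell out homogeneity and diffeomorphism-persistence points that the paper delegates to Remarks \ref{main3+1} and \ref{remark2}).
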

\begin{proof}
Using Lemma \ref{main2} we have
\begin{align*}
\frac{\partial}{\partial t}\varphi_{t}&=\Phi_{g(t),h}\varphi_{t}=\Phi_{\varphi^{*}_{t}
(\tilde{g}(t)),h}\varphi_{t}=\Phi_{\varphi^{*}_{t}
(\tilde{g}(t)),h} Id \circ\varphi_{t}\\
&=\Phi_{\varphi^{*}_{t}
(\tilde{g}(t)),h}\varphi_{t}^{*} Id=\Phi_{\tilde{g}(t),h}Id=\xi,
\end{align*}
for all $z\in SM$ and $t\in[0,T)$. Having $F^{2}(t)=\varphi^{*}_{t}(\tilde{F}^{2}(t))$, we obtain
\begin{align}\label{59+1}
\frac{\partial}{\partial t}(\log F(t))&=
\frac{1}{2}\frac{\frac{\partial}{\partial t}(\varphi^{*}_{t}(\tilde{F}^{2}(t)))}{\varphi^{*}_{t}(\tilde{F}^{2}(t))}\nonumber\\
&=\frac{1}{2}\frac{\varphi^{*}_{t}(\frac{\partial}{\partial t}\tilde{F}^{2}(t))+\mathcal{L}_{(\varphi^{-1}_{t})_{*}(\xi)}
\varphi^{*}_{t}(\tilde{F}^{2}(t))}{\varphi^{*}_{t}(\tilde{F}^{2}(t))}\nonumber\\
&=\frac{1}{2}\frac{\varphi^{*}_{t}(\frac{\partial}{\partial t}\tilde{F}^{2}(t)+\mathcal{L}_{\xi}\tilde{F}^{2}(t))}{\varphi^{*}_{t}(\tilde{F}^{2}(t))}.
\end{align}
By assumption, $F(t)$ form a solution to the Finslerian Ricci flow (\ref{20})
\begin{equation} \label{60}
0=\frac{\partial}{\partial t}(\log F(t))+\mathcal{R}ic_{F(t)}.
\end{equation}
By means of (\ref{59+1}), (\ref{60}) and Lemma \ref{lemmohem} we have
\begin{align*}
0&=\frac{\varphi^{*}_{t}(\frac{\partial}{\partial t}\tilde{F}^{2}(t)+\mathcal{L}_{\xi}\tilde{F}^{2}(t))}{\varphi^{*}_{t}(\tilde{F}^{2}(t))}+2\mathcal{R}ic_{F(t)}\nonumber\\
&=\frac{\varphi^{*}_{t}(\frac{\partial}{\partial t}\tilde{F}^{2}(t)+\mathcal{L}_{\xi}\tilde{F}^{2}(t))}{\varphi^{*}_{t}(\tilde{F}^{2}(t))}
+2\mathcal{R}ic_{\varphi^{*}_{t}(\tilde{F}(t))}\nonumber\\
&=\frac{\varphi^{*}_{t}(\frac{\partial}{\partial t}\tilde{F}^{2}(t)+\mathcal{L}_{\xi}\tilde{F}^{2}(t))}{\varphi^{*}_{t}(\tilde{F}^{2}(t))}
+2\varphi^{*}_{t}(\mathcal{R}ic_{\tilde{F}(t)})\nonumber\\
&=\frac{\varphi^{*}_{t}(\frac{\partial}{\partial t}\tilde{F}^{2}(t)+\mathcal{L}_{\xi}\tilde{F}^{2}(t))+2
\varphi^{*}_{t}(\tilde{F}^{2}(t))\varphi^{*}_{t}(\mathcal{R}ic_{\tilde{F}(t)})}{\varphi^{*}_{t}
(\tilde{F}^{2}(t))}\nonumber\\
&=\frac{\varphi^{*}_{t}(\frac{\partial}{\partial t}\tilde{F}^{2}(t)+\mathcal{L}_{\xi}\tilde{F}^{2}(t)+2
\tilde{F}^{2}(t)\mathcal{R}ic_{\tilde{F}(t)})}{\varphi^{*}_{t}
(\tilde{F}^{2}(t))}\nonumber.
\end{align*}
Therefore,
$
\varphi^{*}_{t}(\frac{\partial}{\partial t}\tilde{F}^{2}(t)+\mathcal{L}_{\xi}\tilde{F}^{2}(t)+2
\tilde{F}^{2}(t)\mathcal{R}ic_{\tilde{F}(t)})=0$, which  implies
\begin{equation}
\frac{\partial}{\partial t}\tilde{F}^{2}(t)=-2
\tilde{F}^{2}(t)\mathcal{R}ic_{\tilde{F}(t)}-\mathcal{L}_{\xi}\tilde{F}^{2}(t).\nonumber
\end{equation}
Hence, $\tilde{F}(t)$ is a solution to the Ricci-DeTurck flow, as we have stated.
\end{proof}
{\bf Proof of Theorem \ref{isotropic}.} The existence statement has been proved in Theorem \ref{main14} for a general Finsler structure. For uniqueness statement in the isotropic case, assume that $F_{1}(t)$ and $F_{2}(t)$ are both solutions to the Finslerian Ricci flow defined on some time interval $[0,T)$ and satisfy $F_{1}(0)=F_{2}(0)$ at the point $t=0$. We claim $F_{1}(t)=F_{2}(t)$ for all $t\in[0,T)$. In order to prove this fact, we argue by contradiction. Suppose that $F_{1}(t)\neq F_{2}(t)$ for some $t\in[0,T)$. Let us consider a real number $\tau\in[0,T)$ where $\tau=\inf\{t\in[0,T):F_{1}(t)\neq F_{2}(t)\}$. We will easily see, $F_{1}(\tau)=F_{2}(\tau)$. Let $\varphi^{1}_{t}$ be a solution of the flow
\begin{equation} \label{67}
\frac{\partial}{\partial t}\varphi^{1}_{t}=\Phi_{g_{1}(t),h}\varphi^{1}_{t},\nonumber
\end{equation}
with the initial condition $\varphi^{1}_{\tau}=Id$ and $\varphi^{2}_{t}$ a solution of the flow
\begin{equation}
\frac{\partial}{\partial t}\varphi^{2}_{t}=\Phi_{g_{2}(t),h}\varphi^{2}_{t},\nonumber
\end{equation}
with the initial condition $\varphi^{2}_{\tau}=Id$. It follows from the standard theory of parabolic differential equations that $\varphi^{1}_{t}$ and $\varphi^{2}_{t}$ are defined on some time interval $[\tau,\tau+\epsilon)$, where, $\epsilon$ is a positive real number. Moreover, if we choose $\epsilon>0$ small enough, then $\varphi^{1}_{t}$ and $\varphi^{2}_{t}$ are diffeomorphisms for all $t\in[\tau,\tau+\epsilon)$. For each $t\in[\tau,\tau+\epsilon)$ we define the two Finsler structures $\tilde{F}_{1}(t)$ and $\tilde{F}_{2}(t)$ by $(F_{1}(t))^{2}=(\varphi^{1}_{t})^{*}(\tilde{F}_{1}(t))^{2}$ and $(F_{2}(t))^{2}=(\varphi^{2}_{t})^{*}(\tilde{F}_{2}(t))^{2}$. It follows from Proposition \ref{main12} that $\tilde{F}_{1}(t)$ and $\tilde{F}_{2}(t)$ are the solutions of the Finslerian Ricci-DeTurck flow. Since $\tilde{F}_{1}(\tau)=\tilde{F}_{2}(\tau)$, the uniqueness of solution to the Finslerian Ricci-DeTurck flow on isotropic Finsler manifolds mentioned in Remark \ref{remarkunique} implies that $\tilde{F}_{1}(t)=\tilde{F}_{2}(t)$ for all $t\in[\tau,\tau+\epsilon)$. For each $t\in[\tau,\tau+\epsilon)$, we define a vector field $\xi$ on $SM$ by
\begin{equation}
\xi=\Phi_{\tilde{g}_{1}(t),h}Id=\Phi_{\tilde{g}_{2}(t),h}Id.\nonumber
\end{equation}
By Proposition \ref{main12}, we have
\begin{eqnarray*}
\frac{\partial}{\partial t}\varphi^{1}_{t}(z)=\xi(\varphi^{1}_{t}(z),t),\nonumber
\end{eqnarray*}
and
\begin{eqnarray*}
\frac{\partial}{\partial t}\varphi^{2}_{t}(z)=\xi(\varphi^{2}_{t}(z),t),\nonumber
\end{eqnarray*}
for $z\in SM$ and $t\in[\tau,\tau+\epsilon)$. Since $\varphi^{1}_{\tau}=\varphi^{2}_{\tau}=Id$, it follows that $\varphi^{1}_{t}=\varphi^{2}_{t}$ for all $t\in[\tau,\tau+\epsilon)$.  Putting these facts together, we conclude that
\begin{equation*}
(F_{1}(t))^{2}=(\varphi^{1}_{t})^{*}(\tilde{F}_{1}(t))^{2}=(\varphi^{2}_{t})^{*}(\tilde{F}_{2}(t))^{2}=
(F_{2}(t))^{2},
\end{equation*}
for all $t\in[\tau,\tau+\epsilon)$. Therefore, $F_{1}(t)=F_{2}(t)$ for all $t\in[\tau,\tau+\epsilon)$. This contradicts the definition of $\tau$. Thus the uniqueness holds well. This completes the proof of Theorem \ref{isotropic}.\hspace{\stretch{1}}$\Box$
\begin{ex}
Let $(M,F_0)$ be an Einstein Randers manifold with dimension $n>2$. We are going to obtain a solution to the Ricci flow \eqref{20}. Using Schur's lemma for Einstein Randers metrics \cite[p.\ 30]{Rob}, the Ricci scalar $\mathcal{R}ic$ of $F_0$ is necessarily constant, that is $\mathcal{R}ic_{F_0}=K$ where, $K$ is a constant. Consider a family of linear scalars $\tau(t)$ defined by
\begin{equation*}
\tau(t):=1-2Kt>0.
\end{equation*}
Define a smooth one-parameter family of Finsler structures on $M$ by
\begin{equation*}
F^{2}(t):=\tau(t)F_{0}^{2}.
\end{equation*}
Thus we have
\begin{equation*}
\log(F(t))=\frac{1}{2}\log(\tau(t)F_{0}^2).
\end{equation*}
Derivative with respect to $t$ yields
\begin{equation}\label{exp1}
\frac{\partial}{\partial t}\log(F(t))=-\frac{K}{\tau(t)}=-\frac{\mathcal{R}ic_{F_{0}}}{\tau(t)}.
\end{equation}
On the other hand, by straight forward computations we have $\frac{1}{\tau(t)}\mathcal{R}ic_{F_{0}}=\mathcal{R}ic_{\tau(t)^{\frac{1}{2}}F_{0}}$, for more details see \cite[p.\ 926]{BY}. Replacing the last relation in (\ref{exp1}) leads to
\begin{equation*}
\frac{\partial}{\partial t}\log(F(t))=-\mathcal{R}ic_{\tau(t)^{\frac{1}{2}}F_{0}}=-\mathcal{R}ic_{{F(t)}}.
\end{equation*}
Hence, $F(t)$ is a solution to the Ricci flow equation \eqref{20}.
\end{ex}
\begin{ex}
Let $(M,F_0)$ be a Finsler manifold and $V=V^{i}(x)\frac{\partial}{\partial x^i}$ a vector field on $M$. The triple $(M,F_0,V)$ is called a Finslerian Ricci soliton if there exists a constant $\lambda$ such that
\begin{equation}\label{soliton}
2Ric_{ij}+\mathcal{L}_{\hat{V}}g_{ij}=2\lambda g_{ij},
\end{equation}
where, $g_{ij}$ is the Hessian related to the Finsler structure $F_0$, $\hat{V}$ the complete lift of $V$ and $\lambda\in\mathbb{R}$. By multiplying $y^iy^j$ in  both sides of \eqref{soliton}, we obtain
\begin{equation*}
2F_{0}^{2}\mathcal{R}ic+\mathcal{L}_{\hat{V}}F_{0}^{2}=2\lambda F_{0}^{2}.
\end{equation*}
Depending on the sign of $\lambda$, a Finslerian Ricci soliton is called shrinking $(\lambda>0)$, steady $(\lambda=0)$, or expanding $(\lambda<0)$. Suppose $(M,F_0,V)$ is a compact Finslerian Ricci soliton. For each point $p\in M$, we denote by $\varphi_{t}(p)$ the unique solution of the ordinary differential equation (ODE)
\begin{equation*}
\frac{\partial}{\partial t}\varphi_{t}(p)=\frac{1}{1-2\lambda t}V|_{\varphi_{t}(p)},
\end{equation*}
with the initial condition $\varphi_{0}(p)=p$. This defines a one-parameter family of diffeomorphisms on $M$. One can define a smooth one-parameter family of Finsler structures on $TM$ as follow
\begin{equation*}
F^{2}(t)=(1-2\lambda t)\tilde{\varphi}^{*}_{t}(F_{0}^{2}),
\end{equation*}
where, $\tilde{\varphi}^{*}_{t}$ is the one-parameter group on $TM$ associated to the complete lift $\frac{1}{1-2\lambda t}\hat{V}$. Then $F(t)$ form a solution to the Ricci flow equation \eqref{20}, see \cite[p.\ 928]{BY}.
\end{ex}
In conclusion, this paper has studied the short-time existence and uniqueness of Ricci flow solutions on isotropic Finsler manifolds with dimension $n$. We have shown that the short-time existence of the solution can be established, and if the Finsler manifold is isotropic, the solution is unique. The results of this paper highlight the versatility of the Ricci flow, as it can be applied to various areas of geometry and physics,  as well as computer graphics, medical imaging, and surface matching.
 
  Behroz Bidabad$^{1,2}$,  Maral Khadem Sedaghat$^{1}$,\\
 bidabad@aut.ac.ir; m\_sedaghat@aut.ac.ir \\
  1. Department of Mathematics and Computer Science, \\ Amirkabir University of Technology (Tehran Polytechnic),\\ Hafez Ave., 15914 Tehran, Iran.\\
2. Institut de Mathematique de Toulouse (IMT),
Universite' Paul Sabatier, 118 route de Narbonne - F-31062 Toulouse, France.\\
behroz.bidabad@math.univ-toulouse.fr

\end{document}